\documentclass{amsart}
\usepackage[dvipsnames,table]{xcolor}
\definecolor{DarkPurple}{RGB}{102,0,153}
\usepackage[
    colorlinks=true,
    linkcolor=OliveGreen,   
    citecolor=DarkPurple, 
    urlcolor=NavyBlue
]{hyperref}
\usepackage{amsmath}
\usepackage{amsthm}
\usepackage{amssymb}
\usepackage{tikz-cd}
\usepackage{etoolbox}
\usepackage{blkarray}
\usepackage{enumitem}
\usepackage{ytableau}
\usepackage{graphicx}
\usepackage{pgfplots}
\pgfplotsset{compat=newest}
\numberwithin{equation}{section}

\usepackage{tikz}
\usetikzlibrary{calc}

\newtheorem{thm}{Theorem}[section]
\newtheorem{theorem}[thm]{Theorem}
\newtheorem{corollary}[thm]{Corollary}

\newtheorem{lemma}[thm]{Lemma}
\newtheorem{prop}[thm]{Proposition}
\newtheorem{proposition}[thm]{Proposition}

\theoremstyle{definition}
\newtheorem{definition}[thm]{Definition}
\newtheorem{example}[thm]{Example}
\newtheorem{remark}[thm]{Remark}

\title[Markov chains on Weyl groups from the geometry of $G/B$]{Markov chains on Weyl groups from the geometry of the flag variety}
\thanks{\textbf{MSC 2020}: Primary: 60J10, 14M15. Secondary: 05E10, 05A17, 20G40.}

\author{Persi Diaconis}
\author{Calder Morton-Ferguson}

\begin{document}

\begin{abstract}
    This paper studies a basic Markov chain, the Burnside process, on the space of flags $G/B$ with $G = \mathrm{GL}_n(\mathbb{F}_q)$ and $B$ its upper triangular matrices. This gives rise to a shuffling: a Markov chain on the symmetric group realized via the Bruhat decomposition $S_n \cong B \backslash G/B$. Actually running and describing this Markov chain requires understanding Springer fibers and the Steinberg variety. The main results give a practical algorithm for all $n$ and $q$ and determine the limiting behavior of the chain when $q$ is large. In describing this behavior, we find interesting connections to the combinatorics of the Robinson--Schensted correspondence and to the geometry of orbital varieties. The construction and description is then carried over to finite Chevalley groups of arbitrary type, describing a new class of Markov chains on Weyl groups.
\end{abstract}

\maketitle
\setcounter{tocdepth}{1}
\tableofcontents

\section{Introduction}
A basic group theory question is to understand the decomposition of a group $G$ into double cosets $H \backslash G / K$. For $G$ a finite group and $H,K$ subgroups, one may ask:
\begin{itemize}
    \item How many double cosets are there?
    \item What are typical sizes?
    \item Do the double cosets have ``nice names"?
    \item Do they fit together into some sort of understandable moduli space?
\end{itemize}

A famous example is the Bruhat decomposition
\[
   W = U\backslash G/B,
\]
with $G = \mathrm{GL}_n(\mathbb{F}_q)$ the invertible matrices over a finite field $\mathbb{F}_q$, $B$ the upper triangular matrices, $U$ the unipotent matrices in $B$, and $W = S_n$. This decomposition is realized by Gaussian elimination, and generalizes to the case of $G$ a finite Chevalley group and $W$ its Weyl group. Going back to the case of a general finite group $G$, the Burnside process is a stochastic algorithm for picking a double coset at random. This gives an effective way to get approximate answers to the first two questions above.

This paper began with the practical question: how would one actually carry out the Burnside process for the action of $U$ on $G/B$, for $G = \mathrm{GL}(\mathbb{F}_q)$? In this case $G/B$ is the space of flags $F$ of vector spaces in $\mathbb{F}_q^n$. The process entails two steps:

\begin{enumerate}
    \item From some $F \in G/B$, choose $u \in U$ with $F = uF$, uniformly at random.
    \item From this $u \in U$, choose $F' \in G/B$ with $F' = uF'$, uniformly at random.
\end{enumerate}

The chain moves from $F$ to $F'$ with probability given by
\begin{align}\label{eqn:pwprobintro}
    P(F, F') & = \frac{1}{|\mathrm{stab}_U(F)|} \sum_{g \in \mathrm{stab}_U(F) \cap \mathrm{stab}_U(F')} \frac{1}{|\mathrm{Fix}_X(g)|},
\end{align}
where Section~\ref{sec:burnside} explains the notation. This gives a Markov chain on $G/B$ with stationary distribution
   \begin{align*}
    \pi(x) = \frac{1}{n!|O_F|},
\end{align*}
where $O_F$ is the $U$-orbit of $F$. Thus simply reporting the orbit after each step gives a method for making a uniform choice of orbit. The orbits are indexed by permutations, so a random choice of $w \in S_n$ results.

It turns out that even in this special case, practical implementation of steps (1)--(2) requires an understanding of the geometry and combinatorics of flag varieties and Springer fibers. Fixing $u \in U$, the set
\[\{F : F^u = F\}\]
is the Springer fiber: a variety whose cohomology underlies the representation theory of $W$. Running the Burnside process involves understanding the size of Springer fibers, while computing explicit transition probabilities involves understanding the intersection of Springer fibers with a fixed double coset $U w B$. This is a difficult task in general. For $\mathrm{GL}_n$, we are able to give an explicit implementation of the Burnside process using connections between Springer fibers and the combinatorics of Hall--Littlewood and Green polynomials.

Section~\ref{sec:burnside} gives background on the Burnside process and the geometry of flag space required to carry out steps (1) and (2) for $G = \mathrm{GL}_n(\mathbb{F}_q)$. Section~\ref{sec:sampling} spells out the details of a full sampling algorithm for this case. This algorithm has been implemented, and actual runs of the Markov chain are presented and discussed in Section~\ref{sec:sim}. This may be read now for further motivation.

Hands-on understanding of the Markov chain, even for $W = S_n$, is difficult. However, when $q$ is large and $n$ fixed, we show that there is a simple description of its behavior in terms of Young tableaux and the Robinson--Schensted correspondence. This is developed in Section~\ref{sec:typea} with examples in Section~\ref{sec:sim}. This analysis gives a useful lower bound on the convergence of the Markov chain to its stationary distribution whose details are in Section~\ref{sec:mixing}. We also discuss a more precise analysis and set of bounds in the case of $G = \mathrm{GL}_3(\mathbb{F}_q)$ in Proposition \ref{prop:gl3precise}.

Section~\ref{sec:generaltype} develops the same theme for the more general case of finite Chevalley groups in any type. It has a self-contained introduction and analogues of the main results of Section~\ref{sec:typea}. Now the Young tableaux and their associated combinatorics must be replaced by more sophisticated tools related to Steinberg and orbital varieties. A practical version of our algorithm from Section~\ref{sec:sampling} in this general case remains for the future. That being said, in all types we are still able to describe large $q$ behavior in terms of interesting combinatorial data of general Weyl groups called Steinberg cells.

The topics developed may interest researchers in disparate fields (probability, combinatorics, geometric representation theory). Thus we have tried to write a broadly accessible account.

\subsection*{Acknowledgments} We thank Michael Howes, Matthew Nicoletti, and Arun Ram for very helpful comments. P.D. was supported in part by NSF grant 1954042.

\section{The Burnside process on \texorpdfstring{$G/B$}{G/B}}\label{sec:burnside}

\subsection{The Burnside process for the action of a finite group}

\subsubsection{General setup}

The Burnside process for the action of a finite group $G$ on a set $X$ is a Markov chain on $X$ built from the following two steps. From a given element $x \in X$ one proceeds as follows:
\begin{enumerate}
    \item Choose an element $g \in \mathrm{stab}_G(x)$ uniformly at random.
    \item Choose an element $y \in \mathrm{Fix}_X(g)$ uniformly at random.
\end{enumerate}
Here $\mathrm{stab}_G(x) \subset G$ is the stabilizer of $x$ and $\mathrm{Fix}_X(g) \subset X$ is the set of elements fixed by $g$. The transition probability $P(x, y)$ of moving from $x$ to $y$ is then given by the formula in (\ref{eqn:pwprobintro}). It is explained in \cite{DH} that $P$ is an ergodic, reversible Markov kernel with stationary distribution given by
\begin{align*}
    \pi(x) = \frac{|\Theta|^{-1}}{|O_x|},
\end{align*}
where $\Theta$ is the set of $G$-orbits on $X$ and $O_x$ is the orbit $G\cdot x$.

One can then consider the Burnside process on the set $\Theta$ of $G$-orbits, obtained by remembering only the $G$-orbit at each step of the Burnside process. One then gets a reversible Markov chain on orbits whose stationary distribution is always uniform; we call its Markov kernel $P_\Theta$.

The Burnside process has been used to generate and count ``P{\'o}lya
trees" (shapes of rooted labeled trees) on $n$ vertices for $n$ of order
$10^8$ \cite{BD}, to generate random partitions of $n$ for $n$ of order $10^8$ and
random contingency tables ($i \times j$ arrays of non-negative integers with
given row and column sums) \cite{DH} and for counting the number of conjugacy
classes of the uni-upper triangular matrices in $\mathrm{GL}_n(\mathbb{F}_q)$ \cite{DZ21}. In all of these applications, as in the present paper, there was substantial mathematical and computational effort required to get the two steps up and running.  Convergence estimates such as a spectral gap for the
underlying Markov chain were left for future work. There have been successful running time estimates for the Burnside
process in special cases; see \cite{DLR}, \cite{Pag23}, \cite{Rah20}. We hope to do this for the walks studied here.

\subsubsection{The Burnside process on the flag variety for $\mathrm{GL}_n(\mathbb{F}_q)$} Now let $G = \mathrm{GL}_n(\mathbb{F}_q)$, invertible $n\times n$ matrices with entries in the finite field $\mathbb{F}_q$. Let $B$ be its subgroup of upper triangular matrices and $U$ the set of unipotent matrices in $B$. We then let $X = G/B$; this is the \emph{flag variety} of $\mathrm{GL}_n(\mathbb{F}_q)$ over the finite field $\mathbb{F}_q$. It is the set of \emph{complete flags}
\[0 = V_0 \subset  V_1 \subset \dots \subset V_n = \mathbb{F}_q^n\]
of vector subspaces of $\mathbb{F}_q^n$ where $\dim(V_k) = k$ for all $1 \leq k \leq n$; we write $V_\bullet$ for convenience. It admits a left action of the group $U$. We now recall the Bruhat decomposition, which describes the orbits of the action of $U$ (or equivalently, of $B$) on $X$. It gives that the orbits of this action are labelled by elements of the symmetric group $S_n$. We write $\ell$ for the length function on $S_n$, which counts the number of inversions of a permutation.
\begin{prop}\label{prop:bruhat}
    There is a decomposition
    \begin{align*}
        G/B & = \coprod_{w \in S_n} BwB = \coprod_{w \in S_n} UwB,
    \end{align*}
    where for each $w \in S_n$, $UwB = BwB$ is considered as a subset of $G/B$, and $|UwB| = q^{\ell(w)}$.
\end{prop}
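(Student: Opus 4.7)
The plan is to prove the three assertions in the order they are stated: first the equality $BwB = UwB$ for each fixed $w$, then the disjoint union decomposition, and finally the size formula.

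For the equality $BwB = UwB$, I would write $B = TU$ where $T$ is the diagonal torus, and observe that $wTw^{-1} = T$ since permutation matrices normalize the diagonal torus, hence $Tw = wT$. Therefore $BwB = TUwB = U(TwB) = U(wTB) = UwB$, and this is equally valid as a statement inside $G$ or after passing to $G/B$.

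For the decomposition $G/B = \coprod_w BwB$, the existence of the decomposition is a Gaussian elimination argument: given $g \in G$, apply column operations (right multiplication by $B$) to normalize $g$ so that the lowest nonzero entry in each column is $1$ and those pivots lie in distinct rows, and then apply row operations (left multiplication by $B$) to clear all entries above each pivot and all entries to the right in the pivot row. The result is a permutation matrix $w$, establishing $g \in BwB$. The disjointness requires an invariant of the double coset. The standard choice is the family of rank functions
\[
  r_{ij}(g) \;=\; \mathrm{rank}\bigl(g[i,\ldots,n;\;1,\ldots,j]\bigr), \qquad 1 \le i,j \le n,
\]
where $g[I;J]$ is the submatrix with rows indexed by $I$ and columns indexed by $J$. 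Right multiplication by $B$ replaces columns $1,\ldots,j$ by linear combinations of themselves, and left multiplication by $B$ preserves the row span of rows $i,\ldots,n$, so each $r_{ij}$ is $(B,B)$-biinvariant. For a permutation matrix $w$ one computes directly $r_{ij}(w) = \#\{k \le j : w(k) \ge i\}$, and this data visibly recovers $w$. Hence each $BgB$ contains exactly one permutation matrix.

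For the size, it suffices to compute $|UwB/B| = |U|/|U \cap wBw^{-1}|$. Since $wBw^{-1} = T \cdot wUw^{-1}$ and $U \cap T = \{e\}$, we have $U \cap wBw^{-1} = U \cap wUw^{-1}$, which is the product of root subgroups $U_{ij}$ with $i<j$ such that $w^{-1}(i) < w^{-1}(j)$. Changing variables $a=w^{-1}(i)$, $b=w^{-1}(j)$, these correspond to pairs $a<b$ with $w(a)<w(b)$, i.e.\ non-inversions of $w$, of which there are $\binom{n}{2}-\ell(w)$. Thus $|U \cap wBw^{-1}| = q^{\binom{n}{2}-\ell(w)}$, and dividing gives $|UwB/B| = q^{\ell(w)}$, as desired.

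The main obstacle is the disjointness argument: producing the invariants $r_{ij}$ and carefully verifying their biinvariance under $B \times B$, and then checking that these invariants separate the permutation matrices. The existence step via Gaussian elimination and the counting step via root subgroups are both routine once the framework is set up.
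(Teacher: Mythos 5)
Your proposal is correct. The paper does not actually prove this proposition: it is quoted as the classical Bruhat decomposition, with the existence step attributed to Gaussian elimination (and the reader pointed to Howe's exposition, and to Carter \S 1.10 for the general-type version), so your write-up is essentially a self-contained version of exactly the argument the paper gestures at: Gaussian elimination for existence, the southwest-submatrix rank functions $r_{ij}$ for $(B,B)$-biinvariance and hence disjointness, and orbit--stabilizer with a root-subgroup count for $|UwB| = q^{\ell(w)}$; the last count also matches the paper's own formula $|U_w| = q^{\binom{n}{2}-\ell(w)}$ in (2.2). One step is stated a bit too quickly: from $wBw^{-1} = T\cdot wUw^{-1}$ and $U \cap T = \{e\}$ it does not formally follow that $U \cap wBw^{-1} = U \cap wUw^{-1}$, since an element of $U$ could a priori be a nontrivial product $t\cdot u'$ with $t \in T$, $u' \in wUw^{-1}$. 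The fix is one line: if $u = wbw^{-1} \in U$ with $b \in B$, then the diagonal of $u$ is the diagonal of $b$ permuted, so $b$ has unit diagonal, i.e.\ $b \in U$ (equivalently, $T$ normalizes $wUw^{-1}$, so the $T$-component in the semidirect decomposition of $wBw^{-1}$ is well defined and is read off the diagonal). With that noted, the argument is complete and standard.
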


In the present paper we study the Burnside process for the action of $U$ on $X$. Given $x, y \in X$, we will write $P(x, y)$ for the transition probabilities, and since by Proposition \ref{prop:bruhat} the orbits for this action are indexed by $S_n$, for $w, z \in W$ we will use $P_{S_n}(w, z)$ to denote the transition probabilities for the corresponding Burnside process on the set of orbits. We refer to it as the $q$-Burnside process when emphasizing the dependence on $q$.

\subsection{Stabilizers in \texorpdfstring{$U$}{U}} Given some choice of $x \in X$, the first step of the Burnside process requires uniform sampling from $\mathrm{stab}_U(x)$. Using Proposition \ref{prop:bruhat} to write $x = uwB$ for some $u \in U$, $w \in W$, we note that
\begin{align*}
    \mathrm{stab}_U(x) & = u \cdot \mathrm{stab}_U(wB) \cdot u^{-1},
\end{align*}
so we now recall an explicit description of $\mathrm{stab}_U(wB)$ for any $w \in W$. 

\begin{definition}
    For any $w \in W$, let
    \[
    U_w = \mathrm{stab}_U(wB) \subset U.
    \]
    Setting $U^w = wUw^{-1}$, one then clearly has $U_w = U \cap wUw^{-1} = U \cap U^w$. 
\end{definition}

\begin{proposition}[see Proposition \ref{prop:uw}]\label{prop:glnuw}
    For any $w \in S_n$, let \[\mathrm{Inv}(w) = \{(i,j) \mid i<j, \; w(i) > w(j)\}\] denote the set of inversions of $w$. 
    
    Then
    \[
        U_w = \left\{ (u_{ij}) \in U \;\middle|\; 
        u_{ij} = 0 \text{ for } (i,j) \in \mathrm{Inv}(w) \right\}.
    \]
\end{proposition}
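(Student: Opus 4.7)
The plan is a direct matrix computation from $U_w = U \cap wUw^{-1}$. Let $\dot w$ be the permutation matrix of $w$, with the convention that places a $1$ in position $(i, w(i))$ (equivalently $\dot w e_i = e_{w^{-1}(i)}$, consistent with the Bruhat indexing $G/B = \coprod_w UwB$ used in the paper). A one-line computation then gives, for any $u = (u_{ij}) \in \mathrm{GL}_n(\mathbb{F}_q)$,
\[
(\dot w^{-1} u \dot w)_{kl} = u_{w^{-1}(k),\,w^{-1}(l)}.
\]

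The condition $u \in \dot w U \dot w^{-1}$ is equivalent to $\dot w^{-1} u \dot w \in U$. Since the diagonal of $\dot w^{-1} u \dot w$ is a permutation of the all-ones diagonal of $u$, unipotency is automatic, and only upper-triangularity remains: $u_{w^{-1}(k), w^{-1}(l)} = 0$ for all $k > l$. Re-indexing via $i = w^{-1}(k)$ and $j = w^{-1}(l)$, this becomes $u_{ij} = 0$ whenever $w(i) > w(j)$. Combining with the assumption $u \in U$, which already forces $u_{ij} = 0$ for $i > j$, the nontrivial constraints are exactly on pairs $(i,j)$ with $i < j$ and $w(i) > w(j)$, i.e.\ on $(i,j) \in \mathrm{Inv}(w)$, yielding the claimed description.

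Since the calculation is elementary once conventions are fixed, the only real pitfall is bookkeeping: the opposite convention $\dot w e_i = e_{w(i)}$ would produce $\mathrm{Inv}(w^{-1})$ rather than $\mathrm{Inv}(w)$, so I would verify on a small example such as $w$ a $3$-cycle in $S_3$ that the answer matches the statement. A more conceptual alternative, which will generalize to arbitrary Chevalley groups as in Proposition \ref{prop:uw}, goes via the root-subgroup decomposition of $U$ together with the fact that conjugation by $\dot w$ permutes root subgroups according to the Weyl-group action on the roots; this identifies $U_w$ with the subgroup generated by those root subgroups indexed by positive roots $\alpha$ with $w^{-1}\alpha$ also positive, which in type $A$ is precisely the complement of $\mathrm{Inv}(w)$ among the positive roots.
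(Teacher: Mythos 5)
Your direct matrix computation is correct and complete: with $\dot w$ defined by $\dot w_{i,w(i)}=1$ one indeed gets $(\dot w^{-1}u\dot w)_{kl}=u_{w^{-1}(k),\,w^{-1}(l)}$, the diagonal is automatically all ones, and intersecting the upper-triangularity condition $u_{ij}=0$ for $w(i)>w(j)$ with $u\in U$ leaves exactly the constraints indexed by $\mathrm{Inv}(w)$; the count $|U_w|=q^{\binom{n}{2}-\ell(w)}$ then falls out since $|\mathrm{Inv}(w)|=\ell(w)$. This is a genuinely different route from the paper, which gives no type-$A$-specific argument at all: it simply points to the general-type statement (Proposition \ref{prop:uw}), i.e.\ Carter's description of $U_w=U\cap{}^wU$ as the product of root subgroups $U_\alpha$ with $\alpha>0$ and $w^{-1}\alpha>0$, which in type $A$ specializes to the complement of the inversion set --- precisely the ``conceptual alternative'' you sketch in your last paragraph. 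What your elementary approach buys is self-containedness and an explicit handle on the convention issue; what the paper's approach buys is uniformity across all Chevalley groups, which is what Section \ref{sec:generaltype} actually needs. Your caveat about conventions is well taken and is the only real pitfall: with the opposite identification $\dot w e_i=e_{w(i)}$ (and right-to-left composition of permutations) the same computation yields the vanishing set $\mathrm{Inv}(w^{-1})$ rather than $\mathrm{Inv}(w)$; since $\ell(w)=\ell(w^{-1})$ nothing downstream in the paper is sensitive to this, and the paper's own $n=3$ table (e.g.\ $\mathrm{stab}_U(s_2s_1B)$ having its free entry at $(1,2)$ and zeros at $(1,3),(2,3)$) is exactly the small-example check you propose for pinning down which identification is in force.
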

In other words, $U_w$ consists of upper triangular unipotent matrices in which the entries corresponding to inversions of $w$ are forced to vanish. In particular, for any $w \in W$ the cardinality of the indexing set in the product which appears in Proposition \ref{prop:glnuw} is exactly $\binom{n}{2} - \ell(w)$, so
\begin{equation}\label{eqn:uw}
    |U_w| = q^{\binom{n}{2} - \ell(w)}.
\end{equation}

\begin{example}
    Suppose $n = 3$. In this case, $W = \{e, s_1, s_2, s_2s_1, s_1s_2, w_0\} \cong S_3$, where $s_1 = (12), s_2 = (23)$ and $w_0 = s_1s_2s_1$. 

    \begin{align*}
        \mathrm{stab}_{U}(s_1B) & \cong \left\{\begin{pmatrix}
            1 & 0 & *\\
            0 & 1 & *\\
            0 & 0 & 1
        \end{pmatrix}\right\} & 
        \mathrm{stab}_{U}(s_2B) & \cong \left\{\begin{pmatrix}
            1 & * & *\\
            0 & 1 & 0\\
            0 & 0 & 1
        \end{pmatrix}\right\},\\
        \mathrm{stab}_{U}(s_2s_1B) & \cong \left\{\begin{pmatrix}
            1 & * & 0\\
            0 & 1 & 0\\
            0 & 0 & 1
        \end{pmatrix}\right\} & 
        \mathrm{stab}_{U}(s_1s_2B) & \cong \left\{\begin{pmatrix}
            1 & 0 & 0\\
            0 & 1 & *\\
            0 & 0 & 1
        \end{pmatrix}\right\}
    \end{align*}
    while $\mathrm{stab}_{U}(eB) \cong U$ and $\mathrm{stab}_{U}(w_0B) \cong \{1\}$.
\end{example}

\subsection{Fixed-point sets and Springer fibers}

Now given an arbitrary $u \in U$, the second step in the Burnside process calls for an understanding of $\mathrm{Fix}_{X}(u)$, which we refer to as $X_u$. This is the set of flags $V_\bullet$ such that
\begin{align*}
    u(V_k) \subset V_k
\end{align*}
for all $1 \leq k \leq n$.

This is the \emph{Springer fiber} associated to the unipotent matrix $u$. We will explain the relationship between the geometry of $X_u$ to the transition probabilities in the Burnside process on $S_n$. A starting point for a discussion of these combinatorics is an understanding of the cardinality $|X_u|$ itself.

\subsubsection{Green polynomials}

In \cite[III.3]{Macdonald}, the polynomials
\[
    X_\mu^\lambda(q)
\]
associated to partitions $\mu, \lambda$ of $n$ are defined as the coefficients expressing the power-sum symmetric functions in terms of Hall–Littlewood polynomials:
\begin{align*}
    p_\mu(x) \;=\; \sum_{\lambda \vdash n} X_\mu^\lambda(q) P_\lambda(x;q).
\end{align*}
Here $\mu$ and $\lambda$ are partitions of $n$, $p_\mu$ is the power-sum symmetric function, and $P_\lambda(x;q)$ is the Hall–Littlewood polynomial. Thus the $X_\mu^\lambda(q)$ form the entries of the transition matrix from the power-sum basis $\{p_\mu\}$ to the Hall–Littlewood basis $\{P_\lambda\}$.

The \emph{Green polynomials} $Q_\mu^\lambda(q)$ are related to these coefficients by the reciprocity identity
\begin{align}\label{eqn:recipxq}
    q^{-b(\lambda)} Q_\mu^\lambda(q) \;=\; X_\mu^\lambda\!\left(1/q\right),
\end{align}
where 
\[
    b(\lambda) \;=\; \sum_{i \geq 1} (i-1)\lambda_i = \deg(X_\mu^\lambda) = \deg(Q_\mu^\lambda).
\]
We note that in \cite{Macdonald} it is referred to as $n(\lambda)$, a notation we avoid here so as not to confuse it with the $n$ in $S_n$.

We are primarily interested in the case $\mu = (1^n)$; in this case, the polynomials
\[
    Q_{(1^n)}^\lambda(q)
\]
encode the cardinalities of the Springer fibers $X_u$ as follows.

\begin{proposition}[see {\cite[Ch.~III, §7]{Macdonald}}]\label{prop:pointcount}
    Let $u \in \mathrm{GL}_n(\mathbb{F}_q)$ be a unipotent matrix with Jordan type $\lambda \vdash n$. Then
    \[
        |X_u| \;=\; Q_{(1^n)}^\lambda(q),
    \]
    where $Q_{(1^n)}^\lambda(q)$ is the corresponding Green polynomial.
\end{proposition}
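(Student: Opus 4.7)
The plan is to reinterpret $|X_u|$ as a structure constant in the Hall algebra of $\mathbb{F}_q[[t]]$ and then pass to symmetric functions via Macdonald's correspondence. Letting $t$ act as $u - 1$ turns $\mathbb{F}_q^n$ into a finite-length $\mathbb{F}_q[t]$-module $M$ whose isomorphism type is precisely the Jordan type $\lambda$ of $u$. A $u$-stable complete flag is then exactly a composition series $0 = M_0 \subset M_1 \subset \cdots \subset M_n = M$, i.e.\ a chain of submodules with all successive quotients simple of type $(1)$. Thus $|X_u|$ is the number of such composition series in any fixed module of type $\lambda$, and in particular depends only on $\lambda$ and $q$.

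In the Hall algebra $H = H(\mathbb{F}_q[[t]])$ with basis $\{[\nu]\}$ indexed by partitions, the defining multiplication rule ensures that the coefficient of $[\lambda]$ in $[(1)]^n$ is exactly this count, so
\[
    [(1)]^n \;=\; \sum_{\lambda \vdash n} N_\lambda(q)\, [\lambda], \qquad N_\lambda(q) := |X_u| \text{ for any } u \text{ of type } \lambda.
\]
Next I would apply Macdonald's ring isomorphism $\phi : H \otimes_{\mathbb{Z}} \mathbb{Q}(q) \xrightarrow{\sim} \Lambda_{\mathbb{Q}(q)}$ sending $[\nu] \mapsto q^{-b(\nu)} P_\nu(x; q^{-1})$ to both sides. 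Since $P_{(1)}(x; q^{-1}) = p_1$ and $b((1)) = 0$, this yields
\[
    p_1^n \;=\; \sum_{\lambda \vdash n} N_\lambda(q)\, q^{-b(\lambda)}\, P_\lambda(x; q^{-1}).
\]

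Comparing with the specialization of the defining identity $p_\mu = \sum_\lambda X^\lambda_\mu(s) P_\lambda(x; s)$ at $s = q^{-1}$ and $\mu = (1^n)$ gives $N_\lambda(q) = q^{b(\lambda)} X^\lambda_{(1^n)}(q^{-1})$, which by the reciprocity \eqref{eqn:recipxq} is exactly $Q^\lambda_{(1^n)}(q)$. The main step requiring care is setting up Macdonald's isomorphism $\phi$ and verifying the normalizations: in particular that $\phi([(1)])$ really is $p_1$ and that the Hall--Littlewood parameter appearing after applying $\phi$ is $q^{-1}$, so that the reciprocity \eqref{eqn:recipxq} applies in the form stated. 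Once these conventions are pinned down, the desired equality $|X_u| = Q^\lambda_{(1^n)}(q)$ drops out from direct coefficient comparison.
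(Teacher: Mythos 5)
Your argument is correct, and it is essentially the standard derivation behind the result: the paper gives no proof of its own here, citing Macdonald Ch.~III, \S 7, and your route --- identifying $u$-stable complete flags with composition series of the $\mathbb{F}_q[t]$-module of type $\lambda$, reading off the count as the coefficient of $[\lambda]$ in $[(1)]^n$ in the Hall algebra, and transporting this through Macdonald's homomorphism $u_\lambda \mapsto q^{-b(\lambda)}P_\lambda(x;q^{-1})$ to compare with $p_1^n = \sum_\lambda X^\lambda_{(1^n)}(q^{-1})P_\lambda(x;q^{-1})$ and invoke the reciprocity (\ref{eqn:recipxq}) --- is precisely the argument in that reference. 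The normalizations you flag are indeed the only delicate point, and they check out: $b((1))=0$, $P_{(1)}(x;t)=p_1$ for every $t$, and the $X^\lambda_\mu$ are polynomials in the Hall--Littlewood parameter, so the specialization at $q^{-1}$ and the coefficient comparison in the basis $\{P_\lambda(x;q^{-1})\}$ are legitimate.
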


Although here we consider $X_u$ only as a finite set, the Springer fiber $\mathbf{X}_u$ can be similarly defined and considered as a variety over $\overline{\mathbb{F}}_q$. We note that by Proposition \ref{prop:pointcount}, $X_{(1^n)}^\lambda(0)$, which is the leading coefficient of $Q_{(1^n)}^\lambda(q)$, can be viewed as the number of irreducible components of $\mathbf{X}_u$. In \cite{Spalt}, it is explained that when $u \in \mathrm{GL}_n(\mathbb{F}_q)$ has Jordan type $\lambda \vdash n$, these irreducible components are in bijection with the set of standard Young tableaux of shape $\lambda$. One can also directly deduce that $X_{(1^n)}^\lambda(0)$ is the number of such tableaux using the following combinatorial argument.
\begin{prop}\label{prop:flambda}
    For any partition $\lambda$ of $n$,
    \begin{align*}
        X_{(1^n)}^\lambda(0) & = f^\lambda,
    \end{align*}
    where $f^\lambda$ is the number of standard Young tableaux of shape $\lambda$.
\end{prop}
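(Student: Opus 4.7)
The plan is to specialize the defining relation
\[
p_\mu(x) = \sum_{\lambda \vdash n} X_\mu^\lambda(q)\, P_\lambda(x;q)
\]
at $\mu = (1^n)$ and $q = 0$, and match coefficients in the Schur basis. The key input from Macdonald's book is that the Hall--Littlewood polynomials specialize to the Schur functions at $q = 0$, that is, $P_\lambda(x;0) = s_\lambda(x)$. So setting $q=0$ in the identity above gives
\[
p_{(1^n)}(x) = \sum_{\lambda \vdash n} X_{(1^n)}^\lambda(0)\, s_\lambda(x).
\]

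On the other hand, $p_{(1^n)}(x) = (x_1 + x_2 + \cdots)^n = p_1(x)^n$, and it is a classical identity (equivalent to the Frobenius character formula for $S_n$, or to the RSK correspondence for biwords with one side trivial) that
\[
p_1^n = \sum_{\lambda \vdash n} f^\lambda\, s_\lambda,
\]
where $f^\lambda$ is the number of standard Young tableaux of shape $\lambda$. Comparing coefficients in the Schur basis on both sides yields $X_{(1^n)}^\lambda(0) = f^\lambda$, as desired.

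There is no real obstacle here: everything reduces to two well-known facts about symmetric functions (the $q=0$ specialization of Hall--Littlewood and the expansion of $p_1^n$ in the Schur basis). The only point worth double-checking is that the normalization conventions for $X_\mu^\lambda(q)$ used in the excerpt agree with those of \cite{Macdonald}, so that the specialization $P_\lambda(x;0) = s_\lambda(x)$ produces the stated identity without a missing prefactor. Once conventions are aligned, the proof is a one-line comparison of coefficients.
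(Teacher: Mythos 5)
Your proof is correct and takes essentially the same route as the paper: specialize the Hall--Littlewood expansion of $p_{(1^n)}$ at $q=0$ (using $P_\lambda(x;0)=s_\lambda(x)$), expand $p_1^n = \sum_\lambda f^\lambda s_\lambda$, and compare Schur coefficients. The paper justifies the last expansion via the Frobenius characteristic of the regular representation, whereas you also mention the equivalent RSK justification, but the argument is the same.
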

\begin{proof}
    By definition, $X_{(1^n)}^\lambda(q)$ is a coefficient of the transition matrix between power-sum products and Hall-Littlewood functions. This means
    \begin{align*}
        p_{(1^n)}(x) & = \sum_\mu X_{(1^n)}^\mu(q) P_\mu(x; q).
    \end{align*}
    We note that $p_{(1^n)}(x) = p_1(x)^n$, where $p_1(x) = x_1 + \dots + x_n$, while $P_\mu(x; 0) = s_\mu(x)$, the Schur function corresponding to $\mu$.

    Noting that $p_1(x)$ is the Frobenius characteristic of the regular representation of $S_n$, we have
    \begin{align*}
        p_1(x)^n & = \sum_\mu f^\mu s_\mu(x),
    \end{align*}
    since $f^\mu$ is the dimension of the irreducible $S_n$-representation indexed by $\mu$. Comparing coefficients on the Schur polynomial $s_\lambda(x)$ gives $X_{(1^n)}^\lambda(0) = f^\lambda$.
\end{proof}

\subsubsection{Springer fibers and Bruhat cells}\label{sec:wild}

We now explain why an understanding of transition probabilities for our Burnside process reduces to an understanding of the cardinalities of the sets
\begin{align}\label{eqn:hard}
    X_u \cap BwB
\end{align}
for all $w \in W$.

The following comes from combining formulas (\ref{eqn:pwprobintro}) and (\ref{eqn:uw}). 
\begin{proposition}
The transition probability $P_{S_n}(w, z)$ for the $q$-Burnside process on $S_n$ is given by
    \begin{align*}
        P_{S_n}(w, z) & = \frac{1}{q^{\binom{n}{2} - \ell(w)}}\sum_{u \in U_{w}(\mathbb{F}_q)} \frac{1}{Q_{(1^n)}^\lambda(q)} |X_u \cap ByB|
    \end{align*}
\end{proposition}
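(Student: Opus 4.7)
The plan is to combine the general Burnside transition formula~(\ref{eqn:pwprobintro}) with the two concrete inputs established in this subsection: the cardinality $|U_w| = q^{\binom{n}{2} - \ell(w)}$ from~(\ref{eqn:uw}), and the Springer fiber count $|X_u| = Q_{(1^n)}^\lambda(q)$ from Proposition~\ref{prop:pointcount}, where $\lambda$ is the Jordan type of $u$. The argument is essentially a direct unpacking of definitions together with one change in the order of summation.

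First I would pass from the flag-level chain $P$ to the orbit-level chain $P_{S_n}$. Under the bijection of Proposition~\ref{prop:bruhat}, the orbit corresponding to $w \in S_n$ is $UwB$, and the orbit transition probability can be computed as
\[
    P_{S_n}(w, z) \;=\; \sum_{F' \in UzB} P(F, F')
\]
for any fixed choice of representative $F \in UwB$; that the right-hand side is independent of this choice is the general feature of the induced orbit chain recalled at the start of Section~\ref{sec:burnside}. I would take the convenient representative $F = wB$, for which $\mathrm{stab}_U(F) = U_w$ by definition of $U_w$.

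Next, expand $P(wB, F')$ using~(\ref{eqn:pwprobintro}) and swap the order of summation:
\[
    P_{S_n}(w, z) \;=\; \frac{1}{|U_w|} \sum_{F' \in UzB} \sum_{u \in U_w \cap \mathrm{stab}_U(F')} \frac{1}{|X_u|} \;=\; \frac{1}{|U_w|} \sum_{u \in U_w} \frac{|X_u \cap UzB|}{|X_u|},
\]
using that $\{F' \in UzB : u \in \mathrm{stab}_U(F')\}$ is precisely $X_u \cap UzB$. By Proposition~\ref{prop:bruhat}, $UzB = BzB$, so the cardinality in the numerator is $|X_u \cap BzB|$.

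Finally, I would substitute $|U_w| = q^{\binom{n}{2} - \ell(w)}$ from~(\ref{eqn:uw}) and $|X_u| = Q_{(1^n)}^\lambda(q)$ from Proposition~\ref{prop:pointcount} (with $\lambda$ the Jordan type of $u$) to obtain the stated formula. There is no real obstacle here: the entire argument is a bookkeeping exercise of projecting the Burnside chain on $G/B$ to orbits and interchanging two finite sums. The only care required is the independence of $P_{S_n}(w, z)$ from the choice of orbit representative $F \in UwB$, which is part of the general construction of the orbit-level Burnside process.
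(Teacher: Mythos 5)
Your proposal is correct and is precisely the unpacking the paper intends when it says the proposition ``comes from combining formulas (\ref{eqn:pwprobintro}) and (\ref{eqn:uw})'': take the representative $F = wB$, expand $P(wB,F')$ via (\ref{eqn:pwprobintro}), interchange the two finite sums so the inner sum counts $X_u \cap UzB$, and substitute $|U_w|$ from (\ref{eqn:uw}) and $|X_u|$ from Proposition~\ref{prop:pointcount}. As a minor point, your derivation naturally produces $|X_u \cap BzB|$ with $\lambda$ understood as the Jordan type of the summation variable $u$; the ``$y$'' in the paper's statement is a slip for ``$z$''.
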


This means that an understanding of the quantities $|X_u \cap ByB|$
for all $u \in U$ and $y \in S_n$ would yield a precise understanding of the transition probabilities in our Markov matrix $P_{S_n}$. In this section, we give a brief overview of what is known of these quantities. The upshot is that for certain ``carefully-chosen" $u$, there is a very nice combinatorial description of $|X_u \cap ByB|$ in terms of $y$ and the Jordan type of $u$, whereas determining the cardinalities $|X_u \cap ByB|$ for arbitrary $u$ has been shown to be very difficult. (More precisely, \cite[Section 6]{T} indicates that necessary and sufficient conditions on $u$ for the following combinatorial description to hold are not known, but a sufficient condition is explained in detail in \cite[Section 4]{T2}.)

It is always true (for any $u$) that $\mathbf{X}_u$ has an \emph{affine paving} (a filtration by closed subvarieties whose successive differences are isomorphic to affine spaces) which can be used to give a geometric explanation for an expression of $|X_u|$ as a positive linear combination of monomials $q^d$. If $u$ is ``carefully chosen" in the sense of \cite{T2}, then the resulting partition of $X_u$ is given by $\{X_u \cap ByB\}_{y \in W}$, and therefore each quantity $|X_u \cap ByB|$ is equal to $q^d$ for $d$ the dimension of the corresponding affine space. The dimension $d$ can then be computed combinatorially in terms of $y$ and the Jordan type of $u$ in terms of the number of \emph{row-strict fillings} of a certain Young diagram; this combinatorial description is explained in \cite[Section 3.2]{T}

It is explained in \cite{T} that little is known about the intersections $|X_u \cap ByB|$ for other $u$ other than the fact that they can be very complicated; examples where these intersections deviate from the combinatorial formula previously explained are exhibited in \cite{Kos} and \cite{Rie}. It is known (c.f.\ \cite{T}) to be a difficult open problem to produce a formula for these quantities in more generality.

We view this as evidence toward our belief that the precise transition probabilities which appear in our Burnside process have the potential to be extremely complicated, and we do not believe it is likely that they can be precisely characterized in any neat algebraic form. Despite this, we are able to analyze leading order terms in $q$ of these transition probabilities to develop a complete understanding of the approximate behavior of the Markov chain up to ``error terms" of order $O(1/q)$. Such a characterization is the main result of Sections \ref{sec:typea} and \ref{sec:generaltype}.

\begin{remark}
    Despite how complicated we expect general entries of the transition matrix $\{P_{S_n}(w, z)\}_{w, z\in S_n}$ to be, we note that the single column $\{P_{S_n}(w_0, w)\}_{w \in S_n}$ (where $w_0$ is the unique ``longest permutation" with $\binom{n}{2}$ inversions) is very simple. In fact, for any $w \in S_n$,
    \begin{align*}
        P_{S_n}(w_0, w) & = \frac{q^{\ell(w)}}{|X|}
    \end{align*}
    This means that starting at $w_0$ and running the Burnside process for one step is equivalent to sampling from the \emph{Mallows measure}, a well-studied probability measure on $S_n$. This is because $U_{w_0}$ is trivial, and therefore this step of the Burnside process samples uniformly from $X$. The Bruhat cell corresponding to any $w \in S_n$ has cardinality $q^{\ell(w)}$, from which this observation follows.
\end{remark}

\section{A sampling algorithm}\label{sec:sampling}

In this section, we continue with the case of $G = \mathrm{GL}_n(\mathbb{F}_q)$. In this case, the Burnside process gives rise to a \emph{shuffling}, i.e.\ a Markov chain on $S_n$. In the present section, we describe an explicit sampling algorithm which one can use to implement the shuffling arising from this Burnside process in practice. This algorithm can be implemented with code (we implemented it in SageMath to run the simulations explained in Section \ref{sec:sim}) and runs efficiently even in cases when a brute-force enumeration of flags is impractical.

\subsection{Some partition combinatorics}

Suppose that $\lambda = (\lambda_1, \dots, \lambda_k)$ is a partition of $n$; we freely identify it with its corresponding Young diagram. When describing such a Young diagram (and in the examples in Section \ref{sec:sim}) we will always use English notation, so the row lengths are weakly decreasing when reading from top to bottom. We can then make the following definitions.

\begin{definition}
    Let $R(\lambda)$ be the set of partitions corresponding to those Young diagrams obtained by removing a single box from the Young diagram of $\lambda$. In other words, partitions which arise from decrementing a single part of $\lambda$ by $1$.

    Given $\lambda' \in \lambda$, we write $r(\lambda, \lambda')$ to denote the index of the part of $\lambda$ which was decremented by $1$ to form $\lambda'$. By convention, $r(\lambda, \lambda')$ is always maximal among those $j$ for which $\lambda_j = \lambda_{r(\lambda, \lambda')}$. 
    
    Finally, let $m(\lambda, \lambda')$ denote the multiplicity of $\lambda_{r(\lambda, \lambda')}$ in $\lambda$.
\end{definition}

\begin{definition}
    Let $I(\lambda) \subset \{1, \dots, n\}$ be the $k$-element set consisting of $1$ along with all integers of the form
    \begin{align}
        \lambda_1 + \lambda_2 + \dots + \lambda_i + 1
    \end{align}
    for $1 \leq i \leq k-1$.
\end{definition}

For any partition $\lambda$ of $n$, we write $J_\lambda$ for the unipotent matrix in Jordan canonical form with its ordered set of $k$ Jordan blocks having sizes $(\lambda_1, \dots, \lambda_k)$.

The following is immediate.
\begin{lemma}
    The $k$-dimensional subspace of $\mathbb{F}_q^n$ consisting of eigenvectors of $J_\lambda$ is
    \begin{align}
        E(\lambda) = \mathrm{span}\{e_{i}\}_{i \in I(\lambda)}.
    \end{align}
\end{lemma}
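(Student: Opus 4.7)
The plan is to compute $\ker(J_\lambda - I)$ directly: since $J_\lambda$ is unipotent, its only eigenvalue is $1$, so the eigenspace is precisely this kernel. The key observation is that $J_\lambda$ is block-diagonal with blocks $J_{\lambda_i}$ being unipotent Jordan blocks, so $J_\lambda - I$ is block-diagonal with blocks $N_{\lambda_i}$, the standard nilpotent Jordan block of size $\lambda_i$ with $1$'s on the superdiagonal.

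First I would handle a single block. For $N_m$ of size $m$, the equation $N_m v = 0$ reads componentwise as $v_2 = v_3 = \dots = v_m = 0$, so $\ker N_m$ is one-dimensional and spanned by the first local basis vector of that block.

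Then I would assemble the global picture. Writing $\mathbb{F}_q^n$ as the direct sum of the coordinate subspaces corresponding to the blocks, $\ker(J_\lambda - I)$ is the direct sum of the individual kernels, hence $k$-dimensional and spanned by the first basis vector of each block. In the global numbering, the first basis vector of the $i$-th block is $e_{\lambda_1 + \lambda_2 + \dots + \lambda_{i-1} + 1}$ (with the empty sum giving $e_1$ for $i=1$), which is exactly the set $\{e_j : j \in I(\lambda)\}$ by the definition of $I(\lambda)$. This establishes the claimed equality $E(\lambda) = \mathrm{span}\{e_i\}_{i \in I(\lambda)}$.

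There is no real obstacle here; the lemma is flagged as immediate precisely because it amounts to unwinding the block structure of $J_\lambda$ together with the elementary fact that $\ker N_m$ is one-dimensional. The only minor bookkeeping is to match the indexing convention for $I(\lambda)$ with the starting positions of the Jordan blocks, which is transparent from the definition.
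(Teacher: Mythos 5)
Your proof is correct and is exactly the unwinding the paper has in mind when it labels this lemma as immediate: computing $\ker(J_\lambda - I)$ block by block and matching the first basis vector of each Jordan block with the indices in $I(\lambda)$. Nothing further is needed.
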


If $v \in E(\lambda)$, then the linear transformation $\overline{J_\lambda}$ is well-defined on the quotient $\mathbb{F}_q^n/\mathrm{span}(v)$. 
\begin{definition}
    For $\lambda' \in R(\lambda)$, let $E^{\lambda'}(\lambda)$ be the subset of $E(\lambda)$ consisting of eigenvectors $v$ of $J_\lambda$ for which the transformation $\overline{J}_\lambda$ has Jordan type $\lambda'$ on $\mathbb{F}_q^n/\mathrm{span}(v)$
\end{definition}

\begin{lemma}\label{lem:elambda}
    Suppose $\lambda' \in R(\lambda)$. Then $E^{\lambda'}(\lambda)$ is exactly the set of $(v_i)_{i=1}^n$ such that
    \begin{itemize}
        \item $v_i = 0$ unless $i \in I(\lambda)$,
        \item $v_i = 0$ for $i \geq \lambda_1 + \dots + \lambda_{r(\lambda, \lambda')} + 1$,
        \item $v_i \neq 0$ for at least one $i$ satisfying
        \[\lambda_1 + \dots + \lambda_{r(\lambda, \lambda') - m(\lambda, \lambda')} + 1 \leq i < \lambda_1 + \dots + \lambda_{r(\lambda, \lambda')} + 1.\]
    \end{itemize}
\end{lemma}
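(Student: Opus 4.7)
The plan is to translate the question about Jordan type on the quotient into a computation of kernel dimensions for powers of a nilpotent operator, and then read off the resulting partition.

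First I would replace $J_\lambda$ by $N := J_\lambda - I$, which is nilpotent with the same Jordan type as $J_\lambda$ and the same invariant subspaces. Setting $b_j := \lambda_1 + \dots + \lambda_{j-1} + 1$ so that $I(\lambda) = \{b_1, \dots, b_k\}$, and using that $N e_{b_j + s} = e_{b_j + s - 1}$ within each Jordan block (with $N e_{b_j} = 0$), the condition that $v$ be an eigenvector of $J_\lambda$, i.e.\ $v \in \ker N$, is exactly the first bullet: $v_i = 0$ for $i \notin I(\lambda)$. In what follows I write $v = \sum_{j=1}^k v_{b_j} e_{b_j}$ and let $\overline{N}$ denote the operator induced by $N$ on $\mathbb{F}_q^n/\mathrm{span}(v)$.

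The Jordan type $\mu$ of $\overline{N}$ is determined by
\begin{align*}
    \mu^t_m = \dim \ker(\overline{N}^m) - \dim \ker(\overline{N}^{m-1}),
\end{align*}
and an elementary computation with the short exact sequence $0 \to \mathrm{span}(v) \to \mathbb{F}_q^n \to \mathbb{F}_q^n/\mathrm{span}(v) \to 0$ shows
\begin{align*}
    \dim \ker(\overline{N}^m) = \dim \ker(N^m) - [v \notin N^m \mathbb{F}_q^n],
\end{align*}
where $[\cdot]$ denotes an Iverson bracket; the point is that the map $w \mapsto N^m w$ on the preimage $K_m := \{w : N^m w \in \mathrm{span}(v)\}$ has kernel $\ker(N^m)$ and image equal to $\mathrm{span}(v)$ exactly when $v \in N^m \mathbb{F}_q^n$. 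Since $N^m \mathbb{F}_q^n$ is spanned by $\{e_{b_j + s - m} : 1 \leq j \leq k,\ m \leq s \leq \lambda_j - 1\}$, it contains the eigenvector $e_{b_j}$ if and only if $\lambda_j > m$, and so $v \in N^m \mathbb{F}_q^n$ if and only if $m < M$, where $M := \min\{\lambda_j : v_{b_j} \neq 0\}$. Combining these, $\mu^t$ agrees with $\lambda^t$ in every position except the $M$-th, where it is smaller by one; equivalently, $\mu$ is obtained from $\lambda$ by deleting one box from some row of length $M$, so $\mu = \lambda'$ exactly when $M = \lambda_{r(\lambda, \lambda')}$.

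It then remains to translate the equality $M = \lambda_{r(\lambda, \lambda')}$ into the second and third bullets. The requirement that every $j$ with $\lambda_j < M$ satisfies $v_{b_j} = 0$ becomes the second bullet (since $\lambda$ is weakly decreasing, these $j$ are exactly those with $j > r(\lambda, \lambda')$, whose starting indices begin at $\lambda_1 + \dots + \lambda_{r(\lambda,\lambda')} + 1$), while the requirement that some $j$ with $\lambda_j = M$ has $v_{b_j} \neq 0$ becomes the third bullet (using the convention that the blocks of size $M$ are indexed by $j \in \{r(\lambda, \lambda') - m(\lambda, \lambda') + 1, \dots, r(\lambda, \lambda')\}$). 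I expect the main obstacle to be this final bookkeeping: the underlying linear algebra is clean, but carefully matching the intrinsic invariant $M$ to the conventional labels $r(\lambda, \lambda')$ and $m(\lambda, \lambda')$ requires unpacking the definitions with care.
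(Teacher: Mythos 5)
Your proof is correct and complete; the paper itself states this lemma without proof (treating it as routine bookkeeping about Jordan forms), so there is no argument of the authors' to compare against, but your route — computing $\dim\ker(\overline{N}^m)$ via the line $\mathrm{span}(v)$ and reading off the conjugate partition, then matching the invariant $M=\min\{\lambda_j : v_{b_j}\neq 0\}$ to the labels $r(\lambda,\lambda')$ and $m(\lambda,\lambda')$ — is exactly the natural verification and all the steps check out (the formula $\dim\ker(\overline{N}^m)=\dim\ker(N^m)-[v\notin N^m\mathbb{F}_q^n]$, the description of $N^m\mathbb{F}_q^n$, and the final translation using that $r(\lambda,\lambda')$ is the last index of its block of equal parts). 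The only cosmetic point is to note that $v=0$ is excluded on both sides (it is not in $E^{\lambda'}(\lambda)$ since the quotient then has type $\lambda$, and it violates your third bullet), so that $M$ is well defined.
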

This gives an explicit bijection
\begin{align}
    E^{\lambda'}(\lambda) \cong \mathbb{F}_q^{r(\lambda, \lambda') - m(\lambda, \lambda')} \times (\mathbb{F}_q^{m(\lambda, \lambda')} \setminus \{0\})
\end{align}
for any $\lambda' \in R(\lambda)$, which leads to the following result.

\begin{corollary}\label{cor:sizeofe}
    Suppose $\lambda' \in R(\lambda)$. Then
    \begin{align}
        |E^{\lambda'}(\lambda)| & = q^{r(\lambda, \lambda') - m(\lambda, \lambda')}(q^{m(\lambda, \lambda')} - 1) = q^{r(\lambda, \lambda')} - q^{r(\lambda, \lambda') - m(\lambda, \lambda')}
    \end{align}
\end{corollary}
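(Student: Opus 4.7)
The plan is to derive this cardinality directly from the explicit bijection
\[
    E^{\lambda'}(\lambda) \cong \mathbb{F}_q^{r(\lambda, \lambda') - m(\lambda, \lambda')} \times (\mathbb{F}_q^{m(\lambda, \lambda')} \setminus \{0\})
\]
stated immediately before the corollary; the cardinality of the right-hand side is a textbook two-factor count. Concretely, I would begin by unpacking the three bullets of Lemma \ref{lem:elambda}. The first bullet says that only coordinates indexed by $i \in I(\lambda)$ can be nonzero, leaving $k$ free coordinates to track. The second bullet forces $v_i = 0$ for every $i \in I(\lambda)$ lying past $\lambda_1 + \dots + \lambda_{r(\lambda, \lambda')}$, which kills the last $k - r(\lambda, \lambda')$ of these and leaves exactly $r(\lambda, \lambda')$ coordinates. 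The third bullet then isolates the last $m(\lambda, \lambda')$ of these $r(\lambda, \lambda')$ surviving coordinates (those corresponding to the rows of $\lambda$ sharing the common length $\lambda_{r(\lambda, \lambda')}$, using the convention that $r(\lambda, \lambda')$ is maximal among $j$ with $\lambda_j = \lambda_{r(\lambda, \lambda')}$) and demands that their joint value be a nonzero vector in $\mathbb{F}_q^{m(\lambda, \lambda')}$. The remaining $r(\lambda, \lambda') - m(\lambda, \lambda')$ coordinates are unconstrained.

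Once this bookkeeping is made explicit, the bijection above is manifest, and the corollary reduces to the arithmetic
\[
    |E^{\lambda'}(\lambda)| = q^{r(\lambda, \lambda') - m(\lambda, \lambda')}\bigl(q^{m(\lambda, \lambda')} - 1\bigr),
\]
followed by distributing to obtain the alternative form $q^{r(\lambda, \lambda')} - q^{r(\lambda, \lambda') - m(\lambda, \lambda')}$. There is no real obstacle here beyond confirming the index accounting: the only step requiring care is verifying that the $m(\lambda, \lambda')$ indices picked out by the third bullet of Lemma \ref{lem:elambda} are precisely the last $m(\lambda, \lambda')$ elements of $I(\lambda)$ among those that survive the second bullet. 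This follows directly from the definition of $m(\lambda, \lambda')$ as the multiplicity of $\lambda_{r(\lambda, \lambda')}$ and the maximality convention on $r(\lambda, \lambda')$, after which the proof is complete in one line.
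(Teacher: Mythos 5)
Your proposal is correct and matches the paper's (implicit) argument exactly: the paper derives Corollary \ref{cor:sizeofe} directly from the bijection $E^{\lambda'}(\lambda) \cong \mathbb{F}_q^{r(\lambda,\lambda') - m(\lambda,\lambda')} \times (\mathbb{F}_q^{m(\lambda,\lambda')} \setminus \{0\})$ furnished by Lemma \ref{lem:elambda}, which is the same counting you carry out. Your index bookkeeping (that the third bullet singles out precisely the last $m(\lambda,\lambda')$ of the $r(\lambda,\lambda')$ surviving coordinates of $I(\lambda)$) is accurate, so the count $q^{r(\lambda,\lambda') - m(\lambda,\lambda')}(q^{m(\lambda,\lambda')} - 1)$ follows as stated.
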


\subsection{The sampling algorithm}

Suppose that $V_\bullet = 0 \subset V_1 \subset \dots \subset V_n = \mathbb{F}_q^n$ is a complete $n$-step flag of subspaces in $\mathbb{F}_q^n$. We now describe an explicit algorithm for sampling another flag of subspaces according to the Burnside process for the action of $U$ on $G/B$.

\subsubsection{Bruhat decomposition via Gaussian elimination} \label{sec:bruhatgaussian}
Choose a basis $\{v_i\}_{i=1}^n$ for $\mathbb{F}_q^n$ such that for all $1 \leq k \leq n$,
\begin{align}
    V_k & = \mathrm{span}(v_1, \dots, v_k).
\end{align}
Then $V_\bullet$ is the flag corresponding to the point $gB \in G/B$, where $g \in G$ is the matrix with $v_i$ (written with respect to the standard basis $\{e_j\}_{j=1}^n$) as its $i$th column. This identification is independent of the choice of basis.

Bruhat decomposition guarantees that one can then apply Gaussian elimination (represented as multiplication by a unipotent upper triangular matrix on the left) to reduce $g$ to a permutation matrix. This gives an element $u \in U$ and $w \in S_n$ such that
\begin{align*}
    uwB = gB \in G/B.
\end{align*}
A very clear exposition of Bruhat decomposition as Gaussian elimination appears in Roger Howe's expository work \cite{Howe}.

\subsubsection{Sampling from the stabilizer}
First we explain that uniformly sampling from $\mathrm{stab}_U(uwB) \subset U$ is straightforward. Indeed, note that
\begin{align}
    \mathrm{stab}_U(uwB) = u\cdot \mathrm{stab}_U(w)\cdot u^{-1} = uU_wu^{-1},
\end{align}
so it is enough to sample uniformly from $U_w$ and then conjugate the resulting element by $u$.

Since by Proposition \ref{prop:glnuw},
\begin{align}
    U_w = \left\{ (u_{ij}) \in U \;\middle|\; 
        u_{ij} = 0 \text{ for } (i,j) \in \mathrm{Inv}(w) \right\} \cong \mathbb{F}_q^{\ell(w_0) - \ell(w)},\label{eqn:uwsample}
\end{align}
it is enough to sample uniformly from $\mathbb{F}_q^{\ell(w_0) - \ell(w)}$. Passing the result through the bijection (\ref{eqn:uwsample}) and conjugating by $u$, we have chosen an element $a \in \mathrm{stab}_U(uwB)$ uniformly at random. We then write $a = QJQ^{-1}$ for some $Q \in G$ with $J$ a unipotent upper triangular matrix in Jordan form with Jordan blocks of non-increasing size corresponding to a permutation $\lambda = (\lambda_1, \dots, \lambda_k)$.

\subsubsection{Weighted sampling on $R(\lambda)$}
The next step of our algorithm is a weighted sampling among elements of $R(\lambda)$. For any $\lambda' \in R(\lambda)$, let
\begin{align*}
    \mathrm{wt}(\lambda') = (q^{r(\lambda, \lambda')} - q^{r(\lambda, \lambda') - m(\lambda, \lambda')})Q_{(1^n)}^{\lambda'}(q).
\end{align*}
We then perform a weighted sampling among elements $\lambda' \in R(\lambda)$ with probabilities proportional to the weights $\mathrm{wt}(\lambda')$. We write $\mu$ for the resulting element of $R(\lambda)$.

\subsubsection{Sampling from the Springer fiber}

By means of the bijection 
\begin{align*}
    E^{\mu}(\lambda) \cong \mathbb{F}_q^{r(\lambda, \mu) - m(\lambda, \mu)} \times (\mathbb{F}_q^{m(\lambda, \mu)} \setminus \{0\})
\end{align*}
described in Lemma \ref{lem:elambda}, one then samples an element of $E^{\mu}(\lambda)$ uniformly at random, calling it $v$.

Since by Corollary \ref{cor:sizeofe} there are $q^{r(\lambda, \mu)} - q^{r(\lambda, \mu) - m(\lambda, \mu)}$ vectors in $E^\mu(\lambda)$. This means that these two steps (weighted sampling among elements of $R(\lambda)$ followed by uniform sampling among $E^\mu(\lambda)$) give a proper weighted sampling among all vectors in $E(\lambda)$ with $v \in E^{\mu}(\lambda)$ having probability proportional to $Q_{(1^n)}^{\mu}(q)$.

With the $v \in E^\mu(\lambda)$ chosen above, one then notes that the transformation $\overline{J}$ on $\mathbb{F}_q^n/\mathrm{span}(v)$ has Jordan type $\mu$. Inductively applying this algorithm, one obtains an $n$-step flag fixed by $J$ which we call $V_\bullet'$, writing
\begin{align*}
    0 \subset V_1' = \mathrm{span}(v) \subset V_2' \subset \dots \subset V_n' = \mathbb{F}_q^n.
\end{align*}
By Proposition \ref{prop:pointcount} applied to $\overline{J}$, there are $Q_{(1^n)}^\mu(q)$ such flags fixed by $\overline{J}$ whose first step $V_1$ is $\mathrm{span}(v)$. For the sake of induction, we assume for now that each of these flags is chosen uniformly at random.

Since our algorithm chooses $v$ with probability proportional to $Q_{(1^n)}^\mu(q)$, which is the number of possible extensions of $\mathrm{span}(v)$ into an $n$-step flag which are fixed by $J$, we then get that our algorithm chooses uniformly at random among flags fixed by $J$. The base case where $n = 1$ is clearly a uniform sampling.

\subsubsection{Conclusion}

We then know that the $n$-step flag
\begin{align}
    0 \subset QV_1' \subset \dots \subset QV_n' = \mathbb{F}_q^n
\end{align}
has been chosen uniformly at random among flags fixed by $a = QJQ^{-1}$. Let $g'B$ be the corresponding point in $G/B$, with $g'$ constructed as in Section \ref{sec:bruhatgaussian}. By the method described therein, we can also then write $g' = u'w'B$ for some $u' \in U$ and $w' \in S_n$.

Starting with the original point $gB$, we have just explained how to sample an element $a$ uniformly from its $U$-stabilizer, and then an element of $G/B$ uniformly among $a$-fixed points. Therefore this algorithm indeed samples according to the Burnside process for the $U$-action on $G/B$.

Writing $gB = uwB$ and $g'B = u'w'B$ and recording only the permutations $w$ and $w'$, this therefore also describes a sampling algorithm for the lumped Burnside process orbits, i.e.\ the corresponding Markov chain on $S_n$.

\section{\texorpdfstring{$\mathrm{GL}_n$}{GLn} behavior: Tableaux and the Robinson--Schensted correspondence}\label{sec:typea}

In this section, we prove our main result for $\mathrm{GL}_n(\mathbb{F}_q)$, in the form of Theorem \ref{thm:limit}, describing the large-$q$ limiting behavior of the Markov chain on $S_n$ arising from the Burnside process on the flag variety in terms of the combinatorics of the Robinson--Schensted correspondence.

Before doing so, we now describe this behavior informally. We will explain that to any permutation $S_n$, one can associate a standard Young tableau $P(w)$ with $n$ boxes. We will show that when $q$ is large, then with very high probability, the Markov chain sends any $w \in S_n$ to a uniform choice of $z \in S_n$ subject to the condition that $P(z) = P(w)$. Thus the Markov chain tends to cluster into ``buckets" labelled by Young tableaux which are very difficult to exit. Despite this, over a very long period of time, the Markov chain still eventually finds its way from bucket to bucket so as to converge to the uniform distribution.

In Section \ref{sec:generaltype}, we will show that the same behavior holds in the more general setting when $\mathrm{GL}_n(\mathbb{F}_q)$ is replaced by a finite Chevalley group $G(\mathbb{F}_q)$ of any type, thereby obtaining a Markov chain on an arbitrary Weyl group $W$. We will then show that the same behavior roughly holds, but the ``buckets" from the previous paragraph can no longer be labeled by Young tableaux. Instead, they will be indexed by \emph{Steinberg cells}, a certain partition of $W$ which arises from the geometry of orbital varieties.

\subsection{The Robinson--Schensted correspondence}

The Robinson--Schensted correspondence is a bijection between permutations $w \in S_n$ and pairs of standard Young tableaux $(P, Q)$ of the same shape $\lambda \vdash n$. Given a permutation $w$, the correspondence produces two tableaux:
\begin{itemize}
    \item $P(w)$, called the \emph{insertion tableau}, constructed by successively inserting the entries of $w$ into a tableau by the row-insertion algorithm described in \cite{Schensted}.
    \item $Q(w)$, called the \emph{recording tableau}, which records the order in which boxes are added during the insertion process.
\end{itemize}
We refer the reader to \cite{Fulton} for a clear summary and exposition of the algorithm in question.  

Both $P(w)$ and $Q(w)$ are standard Young tableaux of the same shape $\lambda$. We let $T(w) = \lambda$ be the corresponding partition of $n$. We now recall the following basic property of the Robinson--Schensted correspondence.
\begin{proposition}[Lemma 7 in \cite{Schensted}]
    For any $w \in S_n$,
    \begin{align}
        P(w^{-1}) & = Q(w).
    \end{align}
\end{proposition}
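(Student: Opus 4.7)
The plan is to derive the identity $P(w^{-1}) = Q(w)$ from the fundamental symmetry of the Robinson--Schensted correspondence under transposition of permutation matrices.

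First, I would identify each permutation $w \in S_n$ with its two-line array
\[
\begin{pmatrix} 1 & 2 & \cdots & n \\ w(1) & w(2) & \cdots & w(n) \end{pmatrix},
\]
or equivalently with the $n \times n$ permutation matrix $M_w$ having a $1$ in position $(i, w(i))$. A direct check shows that $M_{w^{-1}} = M_w^{\top}$, which is the same as saying that the biword for $w^{-1}$ is obtained from the biword for $w$ by swapping the two rows and then re-sorting columns so that the top row is again increasing. In this language, $Q(w)$ records when boxes are added during the row insertion of $w(1), w(2), \ldots$ (so it is indexed by the top row), while $P(w)$ records which entries are inserted (so it is indexed by the bottom row). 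Swapping the roles of the two rows ought therefore to exchange $P$ and $Q$.

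Next, I would invoke the symmetry theorem for RSK: applying RSK to the biword corresponding to $M_w^{\top}$ produces the pair $(Q(w), P(w))$. Combined with $M_{w^{-1}} = M_w^{\top}$, this immediately yields $P(w^{-1}) = Q(w)$ (and simultaneously $Q(w^{-1}) = P(w)$), which is the desired identity.

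The main obstacle is the symmetry theorem itself; it is not obvious from the row-insertion description, because the roles of the two rows appear asymmetric. I would prove it using Fomin's growth diagrams: assign to each lattice point $(i,j)$ in the $n \times n$ grid a partition $\lambda_{ij}$, determined from the permutation matrix $M_w$ by a purely local growth rule. This local rule depends only on the partitions at the three other corners of a unit square together with whether the square contains a $1$ of $M_w$, and is manifestly symmetric under reflection across the diagonal $i = j$. One then verifies that the sequence of partitions along the top edge of the grid reads off $Q(w)$ and the sequence along the right edge reads off $P(w)$; transposing $M_w$ reflects the entire diagram across the diagonal and hence interchanges these two boundary sequences. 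This gives the symmetry theorem, and hence the lemma, without any delicate analysis of bumping paths. A parallel approach via Viennot's shadow line (matrix-ball) construction would also work, since the shadow lines for $M_w$ and $M_w^{\top}$ coincide as geometric objects with the roles of insertion and recording interchanged.
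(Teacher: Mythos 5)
Your argument is correct, but note that the paper does not prove this statement at all: it simply cites it as Lemma 7 of Schensted, so there is no internal proof to compare against. Your reduction of $P(w^{-1})=Q(w)$ to ``RSK applied to $M_w^{\top}$ yields $(Q(w),P(w))$'' is essentially a restatement of the claim (for permutations the symmetry theorem \emph{is} this identity), so the genuine content of your proposal is the growth-diagram argument, and that part is sound: the local rules of Fomin's construction are symmetric in the two adjacent partitions, so reflecting the $n\times n$ grid across the diagonal carries the growth diagram of $M_w$ to that of $M_w^{\top}=M_{w^{-1}}$ while exchanging the two boundary chains of shapes, which are exactly the chains recording $P$ and $Q$. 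The one step you wave at --- ``one then verifies'' that the boundary chains of the growth diagram coincide with the insertion and recording tableaux of row insertion --- is the only place where real work is hidden, since without it the diagonal symmetry of the local rules says nothing about Schensted's algorithm; this verification is standard but should be either carried out or cited (e.g.\ to Fomin or to Stanley's \emph{Enumerative Combinatorics}, Vol.~2, \S 7.13). By contrast, Schensted's original route (and Sch\"utzenberger's) analyzes the insertion procedure directly by induction on the bumping process. Your approach buys a conceptually transparent, manifestly symmetric proof at the cost of setting up the growth-diagram formalism; the classical approach is more elementary but requires delicate bookkeeping of bumping paths. Either is acceptable as a replacement for the citation.
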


\begin{definition}
For a partition $\lambda \vdash n$, recall that $f^\lambda$ is the number of standard Young tableaux of shape $\lambda$. Equivalently, $f^\lambda$ is the dimension of the irreducible representation of the symmetric group $S_n$ corresponding to $\lambda$. The hook length formula gives
\[
f^\lambda \;=\; \frac{n!}{\prod_{(i,j) \in \lambda} h_{ij}},
\]
where $h_{ij}$ is the hook length of the box $(i,j)$ in the Young diagram of $\lambda$.
\end{definition}

\subsection{Underlying geometry}

Recall that in Proposition \ref{prop:glnuw}, we described the subspace $U_w = U \cap wUw^{-1}$ of matrices fixing $wB \in G/B$. It will be convenient to instead work with $\mathfrak{n}$, the set of strictly upper-triangular (nilpotent) $n\times n$ matrices. We let $\mathfrak{n}^w = w\mathfrak{n}w^{-1}$ and write $\mathfrak{n}_w = \mathfrak{n} \cap \mathfrak{n}^w$. We note that
\[\mathfrak{n}_w = \{u - \mathrm{id}_{n\times n} : u \in U_w\}.\]
Since a flag $V_\bullet$ is fixed by $u \in U_w$ if and only if it is fixed by $x = u - \mathrm{id}_{n\times n} \in \mathfrak{n}_w$, we note that $U_w$ and $\mathfrak{n}_w$ can and will be used interchangeably for the purposes of the Burnside process. Note that $\mathfrak{n}$ and its subspaces admit an action by $G$ under conjugation. We note that $\mathfrak{n}$, its subspaces, and $G$-orbits therein all naturally inherit the structures of varieties over $\overline{\mathbb{F}}_q$, which we will use in what follows to discuss notions of genericity and dimension.

\begin{definition}
    For any $w \in W$, let
    \begin{align*}
        V_w = U\cdot \mathfrak{n}_w.
    \end{align*}
\end{definition}
Note that when considered as an algebraic variety over $\overline{\mathbb{F}}_q$, the closure $\overline{V_w}$ is an \emph{orbital variety}: an irreducible component of the intersection of a nilpotent orbit with $\mathfrak{n}$. The following is a restatement of Steinberg's beautiful geometric realization of the Robinson--Schensted correspondence which appears in \cite{Steinberg}.
\begin{prop}[Steinberg, \cite{Steinberg}]\label{prop:pstein}
    For a pair of permutations $w, z \in S_n$, $P(w) = P(z)$ if and only if
    \begin{align}\label{eqn:unwuny}
        U \cdot \mathfrak{n}_w = U \cdot \mathfrak{n}_z.
    \end{align}
\end{prop}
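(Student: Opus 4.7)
The plan is to derive this from Steinberg's geometric construction of the Robinson--Schensted correspondence via the Steinberg variety $Z = \{(x, B_1, B_2) \in \mathfrak{n} \times G/B \times G/B : x \in \mathfrak{b}_1 \cap \mathfrak{b}_2\}$. This variety admits two dual parametrizations of its irreducible components, and identifying them is precisely the source of the Robinson--Schensted bijection.

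I would proceed in three steps. First, the projection $Z \to G/B \times G/B$ decomposes $Z$ according to the $G$-relative position of the pair $(B_1, B_2)$, which is indexed by $W$; this produces one irreducible component $\overline{Z_w}$ for each $w \in W$, defined as the closure of the preimage of the $G$-orbit of type $w$. Second, the projection $\pi : Z \to \mathfrak{n}$ has fiber $\mathbf{X}_x \times \mathbf{X}_x$ over each nilpotent $x$, where $\mathbf{X}_x$ is the Springer fiber; by Spaltenstein's theorem, the components of $\mathbf{X}_x$ (for $x$ of Jordan type $\lambda$) are parametrized by standard Young tableaux of shape $\lambda$, so the components of $Z$ are indexed by pairs $(P, Q)$ of standard Young tableaux of the same shape. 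Matching the two parametrizations of the components of $Z$ is Steinberg's realization of the Robinson--Schensted bijection $w \leftrightarrow (P(w), Q(w))$. Third, I would observe that $\overline{V_w}$ coincides with the image in $\mathfrak{n}$ of $\overline{Z_w}$ under the map fixing $B_1 = B$ and then applying $\pi$; since this construction retains only the first tableau in the pair $(P(w), Q(w))$, the orbital variety $\overline{V_w}$ depends solely on $P(w)$.

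This yields the closure equality $\overline{V_w} = \overline{V_z}$ if and only if $P(w) = P(z)$. To conclude the stated set equality, I would use that $V_w = U \cdot \mathfrak{n}_w$ is the $U$-saturation of $\mathfrak{n}_w$, combined with a careful analysis of the stratification of the orbital variety $\overline{V_w}$ by its constituent $U$-orbits, to verify that this set depends only on the underlying orbital variety.

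The main obstacle I anticipate is confirming that $\overline{V_w}$ is labeled by $P(w)$ rather than $Q(w)$ in Steinberg's construction; this is a genuinely convention-dependent matter, requiring careful tracking of which Borel plays which role in the definition of $Z$ and of how the bijection is normalized against the conventions fixed in the excerpt. A secondary but nontrivial concern is that the proposition is phrased as a set equality $V_w = V_z$, whereas Steinberg's theorem most naturally produces closure equality; small-$n$ computations suggest the stated equality genuinely holds only at the level of closures or requires a careful reinterpretation, so bridging this gap is a delicate point that should be addressed explicitly.
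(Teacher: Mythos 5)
Your route is essentially an unpacking of the citation on which the paper rests: the paper does not prove this proposition from scratch, but states it as Steinberg's geometric realization of Robinson--Schensted and then sketches an alternative derivation by combining Borho--Brylinski (in type $A$, right Steinberg cells coincide with right Kazhdan--Lusztig cells) with Kazhdan--Lusztig's description of cells in $S_n$ via tableaux. So your Steinberg-variety argument --- the two parametrizations of the components of $Z$, matched through Spaltenstein's parametrization of components of Springer fibers, with $\overline{V_w}$ recovered as the image in $\mathfrak{n}$ of the fiber of the component $\overline{Z_w}$ over $B_1 = B$ --- re-proves the cited theorem rather than following the paper's alternative cell-theoretic route. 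Both are legitimate; yours makes the geometric content visible, at the cost of needing to make explicit the (standard, but not automatic) correspondence between irreducible components of $\mathbb{O}\cap\mathfrak{n}$ and irreducible components of the Springer fiber of a point of $\mathbb{O}$, which is what justifies your assertion that retaining ``only the first tableau'' pins down $\overline{V_w}$.

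Both of the concerns you flag are genuine and not mere hedging. The displayed equality can only be meant at the level of closures: already in $S_3$, one of the two elements $w$ in a given cell has $\mathfrak{n}_w$ equal to a $U$-stable two-dimensional space (so $V_w$ is a plane), while the other element $z$ of the same cell has $\mathfrak{n}_z$ a single root line whose $U$-saturation is only a dense constructible subset of a plane; thus $V_w\neq V_z$ as sets even though $\overline{V_w}=\overline{V_z}$, and the same caveat applies to the set equality in Definition \ref{def:steincell}. What is true, and what the proof of Theorem \ref{thm:limit} actually uses (where only leading-order point counts matter), is the equality of orbital varieties. Your $P$-versus-$Q$ worry is likewise substantive: $\mathfrak{n}_w$ depends on $w$ only through whether the chosen permutation-matrix convention makes its vanishing entries the inversions of $w$ or of $w^{-1}$, and since $P(w^{-1})=Q(w)$ these two choices swap the symbol appearing in the statement; which symbol is correct must therefore be checked against Proposition \ref{prop:glnuw} and the explicit $\mathrm{GL}_3$ stabilizers, so the careful tracking you propose is necessary rather than optional.
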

An alternative way to see this result is as follows. In the forthcoming Definition \ref{def:steincell}, we recall the definition of \emph{right Steinberg cells} in general Weyl groups and explain that by definition, $w$ and $y$ are in the same right Steinberg cell if and only if (\ref{eqn:unwuny}) holds. Proposition \ref{prop:pstein} then asserts that for $S_n$, right Steinberg cells are determined by the $P$-symbol of a permutation. Indeed, in \cite{BorBry}, it is shown that for $S_n$, right Steinberg cells agree with right Kazhdan--Lusztig cells; our result then follows from Kazhdan--Lusztig's original work \cite{KL} where right Kazhdan--Lusztig cells in $S_n$ are indexed by standard Young tableaux, and membership of an element $w$ in a cell indexed by $P$ is decided by the condition that $P(w) = P$.

\begin{lemma}\label{lem:genericjordan}
    When $q$ is large, a generic element of $\mathfrak{n}_w$ (considered as a variety over $\mathbb{F}_q$) has Jordan type given by $T(w)$, the shape of $P(w)$. 
    
    Further, any arbitrary element of $\mathfrak{n}\cap \mathfrak{n}^w$ has Jordan type $\mu$ for some $\mu \leq T(w)$ in the dominance order for partitions.
\end{lemma}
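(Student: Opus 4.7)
The plan is to introduce a generic Jordan type $\lambda_w$ on the irreducible variety $\mathfrak{n}_w$, deduce the second assertion from upper semicontinuity of Jordan type, and then identify $\lambda_w = T(w)$ via Steinberg's theory of orbital varieties. By Proposition~\ref{prop:glnuw}, $\mathfrak{n}_w$ is an affine space---the strictly upper-triangular matrices with entries forced to vanish at the positions in $\mathrm{Inv}(w)$---and in particular irreducible. Hence the Jordan-type function is constant on a Zariski-dense open subset of $\mathfrak{n}_w$; call the common value $\lambda_w$. For $q$ large, Lang--Weil (or an elementary variety-counting estimate comparing the proper closed complement with the whole) then gives that the $\mathbb{F}_q$-points of this open subset account for a $1 - O(1/q)$ proportion of the $\mathbb{F}_q$-points of $\mathfrak{n}_w$, which is what ``generic'' means in the $\mathbb{F}_q$ setting.

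For the second assertion, I would invoke the classical fact that $\overline{\mathcal{O}_\mu} = \bigcup_{\nu \leq \mu} \mathcal{O}_\nu$ in the dominance order on partitions of $n$, so that the locus $\{x \in \mathfrak{n} : \mathrm{type}(x) \leq \mu\}$ is Zariski-closed for every $\mu$. Upper semicontinuity then forces $\lambda_w$ to be the unique dominance-maximum among Jordan types realized on $\mathfrak{n}_w$, so every element of $\mathfrak{n} \cap \mathfrak{n}^w = \mathfrak{n}_w$ has Jordan type $\mu \leq \lambda_w$. The proof thus reduces to establishing $\lambda_w = T(w)$.

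This last identification is the main obstacle, and is where Steinberg's geometric realization of the Robinson--Schensted correspondence \cite{Steinberg} enters. Since conjugation preserves Jordan type, $V_w = U \cdot \mathfrak{n}_w$ is an irreducible $U$-stable subvariety of $\mathfrak{n}$ whose generic Jordan type equals $\lambda_w$, and $\overline{V_w}$ is therefore an irreducible closed subset of $\overline{\mathcal{O}_{\lambda_w}} \cap \mathfrak{n}$ whose generic point lies in $\mathcal{O}_{\lambda_w}$. Steinberg's theorem---whose essential content is already recalled in Proposition~\ref{prop:pstein}---asserts that the assignment $w \mapsto \overline{V_w}$ takes values in the orbital varieties (i.e.\ the irreducible components) of $\overline{\mathcal{O}_{T(w)}} \cap \mathfrak{n}$, and descends to a bijection between $P$-classes in $S_n$ and these components for each nilpotent orbit in type $A$. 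Applied here, this forces $\lambda_w = T(w)$ and completes the argument.
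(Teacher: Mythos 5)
Your argument is correct, but it reaches the key identification by a different route than the paper. Your reduction is clean: irreducibility of the affine space $\mathfrak{n}_w$ gives a well-defined generic Jordan type $\lambda_w$, a point-count (Lang--Weil or the trivial estimate on a proper closed complement) handles the ``$q$ large'' reading of genericity, and the closure order $\overline{\mathcal{O}_\mu} = \bigcup_{\nu \leq \mu}\mathcal{O}_\nu$ (equivalently, semicontinuity of $\mathrm{rank}(x^k)$) forces every element of $\mathfrak{n}_w$ to have type $\leq \lambda_w$, which disposes of the second assertion once $\lambda_w = T(w)$ is known. For that identification you appeal to the orbital-variety form of Steinberg's theorem: $\overline{V_w} = \overline{U\cdot\mathfrak{n}_w}$ is an irreducible component of $\overline{\mathcal{O}_{T(w)}}\cap\mathfrak{n}$, so the dense orbit met by $\mathfrak{n}_w$ is $\mathcal{O}_{T(w)}$. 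The paper instead proves this generic-type statement combinatorially, via Gansner's theorem on generic nilpotent matrices supported on the inversion digraph (together with Greene's theorem), and uses Gansner's Lemma~4.1 for the dominance claim; your semicontinuity argument replaces the latter and is if anything more standard. Two caveats on your citation. First, Proposition~\ref{prop:pstein} as stated in the paper only records the equivalence $P(w)=P(z)\iff V_w=V_z$ and does \emph{not} identify the ambient nilpotent orbit, so the orbit-identification you use is a strictly stronger statement that must be cited from \cite{Steinberg} (or \cite{BorBry}) directly rather than ``recalled in Proposition~\ref{prop:pstein}.'' Second, this is exactly where a transpose convention can go wrong (shape of $P(w)$ versus its conjugate, $w$ versus $w^{-1}$, inversion versus non-inversion support); the paper's Gansner route makes this bookkeeping explicit, and your write-up should verify the convention (e.g.\ on $w=s_1\in S_2$, where $\mathfrak{n}_{s_1}=0$ has type $(1,1)=$ shape of $P(s_1)$) before asserting $\lambda_w = T(w)$ on the nose.
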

\begin{proof}
    By definition, a generic element of $\mathfrak{n} \cap \mathfrak{n}^w$ has a zero in entry $(i, j)$ if $(i,j)$ is not in the inversion set of $w$ and a generic nonzero element of $\mathbb{F}_q$ if $(i,j)$ is in the inversion set of $w$ for $i < j$. When $q$ is large, one can choose generic nonzero elements such that the Jordan type of the corresponding matrix has the same Jordan type as a generic element (over $\mathbb{C}$) of the subspace of $\mathfrak{n}$ corresponding to the \emph{inversion digraph of $w$} as in \cite{Gan}.

    Gansner's theorem, explained in loc.\ cit., gives a partition from any generic nilpotent matrix with support determined by a digraph by taking the Jordan type. The inversion digraph is precisely the complement of the poset of allowed entries for $\mathfrak{n} \cap \mathfrak{n}^{w}$. Gansner explains that when this procedure is applied to a generic nilpotent matrix whose support is determined by the the inversion digraph of a permutation, one obtains the transpose of the underlying Young diagram obtained from applying Robinson--Schensted algorithm, as detailed in \cite{Schensted}, to that same permutation. This relationship is established using Greene's theorem from \cite{Greene}. Thus applying it to the complement (the non-inversion digraph), we obtain simply $T(w)$ without a transpose; this is exactly the first claim in the lemma.

     Now note that the same logic combined with \cite[Lemma 4.1]{Gan} implies that any matrix (generic or not) in $\mathfrak{n} \cap \mathfrak{n}^w$ must have Jordan type dominated by $T(w)$ in the dominance order. 
\end{proof}

\begin{definition}
    For any $w \in S_n$, let $d(w) = \dim V_w$ as a variety over $\overline{\mathbb{F}}_q$. 
\end{definition}

The following explicit computation for $d(w)$ follows from \cite{BorBry}.
\begin{prop}\label{prop:dworbit}
    There exists some generic element $x \in \mathfrak{n}_w$ such that
    \begin{align}
        d(w) & = \frac{1}{2}\dim \mathbb{O}_x,
    \end{align}
    where $\mathbb{O}_x$ is the nilpotent orbit $G\cdot x$.
\end{prop}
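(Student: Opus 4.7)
The plan is to identify $\overline{V_w}$ with an orbital variety for a suitable nilpotent orbit and then invoke the classical dimension formula for orbital varieties.

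First, I would use Lemma \ref{lem:genericjordan} to select a generic $x \in \mathfrak{n}_w$ whose Jordan type is exactly $T(w)$; let $\mathbb{O} := \mathbb{O}_x = G \cdot x$ be the corresponding nilpotent orbit. The second assertion of Lemma \ref{lem:genericjordan} shows that every element of $\mathfrak{n}_w$ has Jordan type dominated by $T(w)$, so $\mathfrak{n}_w \subset \overline{\mathbb{O}} \cap \mathfrak{n}$, and consequently $V_w = U\cdot \mathfrak{n}_w \subset \overline{\mathbb{O}} \cap \mathfrak{n}$. Since $\mathfrak{n}_w$ is an affine space (hence irreducible) and $V_w$ is the image of $U \times \mathfrak{n}_w$ under the conjugation map, $V_w$ is irreducible as well.

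Next, I would upgrade this containment to the statement that $\overline{V_w}$ is an irreducible component of $\overline{\mathbb{O}} \cap \mathfrak{n}$, i.e., an \emph{orbital variety}. This is precisely the identification reviewed in the paragraph preceding Definition~\ref{def:steincell} below and is the main output of \cite{BorBry}: the sets $\{\overline{U \cdot \mathfrak{n}_w}\}_{w \in W}$ exhaust the collection of orbital varieties, with $\overline{V_w}$ and $\overline{V_z}$ coinciding exactly when $w$ and $z$ lie in the same right Steinberg cell. The proof in \cite{BorBry} proceeds by decomposing the Steinberg variety $Z \subset \mathfrak{n} \times G/B$ into irreducible components indexed by $W$ and observing that the projection of the component indexed by $w$ to $\mathfrak{n}$ is $\overline{V_w}$.

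Having identified $\overline{V_w}$ as an orbital variety for $\mathbb{O}$, I would conclude by applying the theorem of Spaltenstein (proved independently by Steinberg) that every orbital variety for a nilpotent orbit $\mathbb{O}$ has dimension exactly $\tfrac{1}{2}\dim \mathbb{O}$. Since $V_w$ is open dense in $\overline{V_w}$, this yields
\[
d(w) \;=\; \dim V_w \;=\; \dim \overline{V_w} \;=\; \tfrac{1}{2}\dim \mathbb{O}_x,
\]
as claimed. The main obstacle is the middle step: the inclusion $\overline{V_w} \subset \overline{\mathbb{O}} \cap \mathfrak{n}$ is immediate, but promoting it to an irreducible component is genuine geometric content and relies on the global dimension count for the Steinberg variety carried out in \cite{BorBry}.
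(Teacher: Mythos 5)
Your argument is correct and takes essentially the same route as the paper, which simply notes that $\overline{V_w}$ is an orbital variety and cites \cite{BorBry} for the dimension formula; you have merely filled in the standard chain (the Steinberg-variety identification of $\overline{U\cdot\mathfrak{n}_w}$ as an orbital variety, plus the Spaltenstein--Steinberg theorem that orbital varieties have dimension $\tfrac{1}{2}\dim\mathbb{O}$). One cosmetic remark: orbital varieties are usually defined as irreducible components of $\mathbb{O}\cap\mathfrak{n}$ rather than of $\overline{\mathbb{O}}\cap\mathfrak{n}$, but this does not affect your dimension count.
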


We note that the nilpotent orbit $\mathbb{O}_x$ depends only on the Jordan type $\lambda(x)$, so it makes sense to simply refer to a nilpotent orbit $\mathbb{O}_\lambda$ for $\lambda$ a partition of $n$. Applying Lemma \ref{lem:genericjordan} to the dimension formula in Proposition \ref{prop:dworbit}, we get the following.
\begin{corollary}\label{cor:dwnilp}
    For any $w \in S_n$, let $\lambda = T(w)$. Then $d(w)$ depends only on $\lambda$, and we have
    \begin{align}
        d(w) & = \frac{\dim \mathbb{O}_\lambda}{2}
    \end{align}
\end{corollary}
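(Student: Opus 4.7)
The plan is to directly combine the two preceding results: Proposition \ref{prop:dworbit} provides the formula for $d(w)$ in terms of a nilpotent orbit, and Lemma \ref{lem:genericjordan} identifies the Jordan type of the relevant generic element. The only real work is to verify that the word ``generic'' in these two statements refers to the same notion, so that a single $x \in \mathfrak{n}_w$ can play both roles simultaneously.

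First, I would observe that $\mathfrak{n}_w$ is an affine space, hence irreducible. Both ``genericity'' conditions in question (``belonging to the unique dense nilpotent orbit meeting $\mathfrak{n}_w$'' in Lemma \ref{lem:genericjordan}, and ``generic element for the dimension formula'' in Proposition \ref{prop:dworbit}) are satisfied on Zariski-open dense subsets of $\mathfrak{n}_w$. Intersecting these two opens gives a nonempty Zariski-open subset, so we may pick a single $x \in \mathfrak{n}_w$ realizing both.

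Next, for such an $x$, Lemma \ref{lem:genericjordan} gives that the Jordan type of $x$ equals $T(w) = \lambda$. Therefore the nilpotent orbit $\mathbb{O}_x = G\cdot x$ is precisely the orbit $\mathbb{O}_\lambda$ determined by the partition $\lambda$. Applying Proposition \ref{prop:dworbit} to this same $x$, we obtain
\[
d(w) \;=\; \tfrac{1}{2}\dim \mathbb{O}_x \;=\; \tfrac{1}{2}\dim \mathbb{O}_\lambda.
\]
Since the right-hand side depends only on $\lambda = T(w)$, this establishes both claims of the corollary simultaneously.

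There is essentially no obstacle here beyond the ``genericity alignment'' bookkeeping; the heavy lifting has already been done in Lemma \ref{lem:genericjordan} (via Gansner's theorem and Greene's theorem, relating the Jordan type of a generic matrix with prescribed support to Robinson--Schensted shape) and in the dimension formula imported from \cite{BorBry}. The only cosmetic point worth flagging in the write-up is that $d(w)$ was defined as the dimension over $\overline{\mathbb{F}}_q$ of $V_w = U \cdot \mathfrak{n}_w$, so the genericity statements should be interpreted on the base-changed variety; this causes no issue since all constructions commute with base change.
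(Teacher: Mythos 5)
Your proposal is correct and follows essentially the same route as the paper: combine Proposition \ref{prop:dworbit} with Lemma \ref{lem:genericjordan}, noting that $\mathbb{O}_x$ depends only on the Jordan type, which for a generic $x \in \mathfrak{n}_w$ is $T(w) = \lambda$. Your extra care in aligning the two genericity conditions on the irreducible (indeed linear) space $\mathfrak{n}_w$ just makes explicit what the paper leaves implicit.
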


The dimension $\dim \mathbb{O}_\lambda$ can be described combinatorially as follows (see, e.g., \cite[III.7]{CMcG}). 
\begin{lemma}\label{prop:nilpdim}
    For any partition $\lambda$ of $n$, the dimension of the nilpotent orbit $\mathbb{O}_\lambda$ is
    \begin{align}
        \dim \mathbb{O}_\lambda & = n^2 - \sum_i (\lambda_i')^2,
    \end{align}
    where $\lambda'$ is the transpose (conjugate partition) of $\lambda$.
\end{lemma}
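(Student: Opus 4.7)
The plan is to apply the orbit--stabilizer theorem to the adjoint action of $G = \mathrm{GL}_n$ on $\mathfrak{gl}_n$ over $\overline{\mathbb{F}}_q$. Fix a Jordan canonical representative $x_\lambda = J_{\lambda_1} \oplus \cdots \oplus J_{\lambda_k}$ of $\mathbb{O}_\lambda$, where $J_m$ denotes the nilpotent Jordan block of size $m$. Then
\begin{align*}
    \dim \mathbb{O}_\lambda \;=\; \dim G - \dim Z_G(x_\lambda) \;=\; n^2 - \dim Z_G(x_\lambda),
\end{align*}
so the task reduces to showing that $\dim Z_G(x_\lambda) = \sum_k (\lambda_k')^2$.

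To compute the centralizer dimension, decompose $\overline{\mathbb{F}}_q^n = V_1 \oplus \cdots \oplus V_k$ with $\dim V_i = \lambda_i$, so that $x_\lambda$ acts as $J_{\lambda_i}$ on each $V_i$. Any element of $\mathrm{End}(\overline{\mathbb{F}}_q^n)$ commuting with $x_\lambda$ is determined by a block matrix $(B_{ij})$ with $B_{ij}: V_j \to V_i$ satisfying $J_{\lambda_i} B_{ij} = B_{ij} J_{\lambda_j}$. A direct matrix-entry calculation (the conditions are Toeplitz-like: each $B_{ij}$ is constant along a family of diagonals) shows that the solution space is $\min(\lambda_i, \lambda_j)$-dimensional. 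Since $Z_G(x_\lambda)$ is the intersection of this linear centralizer with $\mathrm{GL}_n$, and contains the identity, it is a Zariski-open subset and inherits the same dimension.

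Summing over $(i,j)$ therefore yields
\begin{align*}
    \dim Z_G(x_\lambda) \;=\; \sum_{i,j} \min(\lambda_i, \lambda_j).
\end{align*}
To conclude, I would verify the combinatorial identity
\begin{align*}
    \sum_{i,j} \min(\lambda_i, \lambda_j) \;=\; \sum_{k \geq 1} (\lambda_k')^2
\end{align*}
by writing $\min(\lambda_i, \lambda_j) = \#\{k \geq 1 : \lambda_i \geq k,\ \lambda_j \geq k\}$ and swapping the order of summation, using the defining identity $\lambda_k' = \#\{i : \lambda_i \geq k\}$. Combining everything yields $\dim \mathbb{O}_\lambda = n^2 - \sum_k (\lambda_k')^2$.

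The only nontrivial step is the block-by-block centralizer computation, and even that is a classical exercise; the orbit--stabilizer input and the combinatorial identity are essentially bookkeeping. I expect the cleanest presentation to separate the linear algebra (solving $J_{\lambda_i} B = B J_{\lambda_j}$) from the partition combinatorics, so that the statement is transparently a consequence of two well-known facts.
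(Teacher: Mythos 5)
Your argument is correct: the orbit--stabilizer reduction, the Toeplitz-type computation giving $\dim Z_G(x_\lambda)=\sum_{i,j}\min(\lambda_i,\lambda_j)$ (with the observation that the group centralizer is a dense open subset of the centralizer algebra, which also guarantees the dimension formula for the orbit applies in characteristic $p$), and the double-counting identity $\sum_{i,j}\min(\lambda_i,\lambda_j)=\sum_k(\lambda_k')^2$ are all sound. The paper does not prove this lemma but simply cites Collingwood--McGovern, and your proof is essentially the standard argument found there, so there is nothing to add.
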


\subsection{Combinatorics of Green polynomials}
Recall that for any $\lambda \vdash n$ with conjugate partition $\lambda'$,
    \begin{align}
        b(\lambda) & = \deg Q_{(1^n)}^\lambda(q) = \sum_{i \geq 1} (i - 1)\lambda_i = \sum_j \binom{\lambda_j'}{2}.\label{eqn:macdonaldblambda}
\end{align}

Comparing this with Lemma \ref{prop:nilpdim} gives us the following comparison between $b(\lambda)$ and $d(w)$, whenever $P(w)$ has shape $\lambda$.

\begin{lemma}\label{lem:blambda}
    For any partition $\lambda$ of $n$ and any $w \in S_n$ such that $P(w)$ has shape $\lambda$, we have
    \begin{align}
        b(\lambda) & = \binom{n}{2} - d(w).
    \end{align}
\end{lemma}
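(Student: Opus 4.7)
The plan is to reduce the identity to a direct algebraic computation by combining the three formulas already established in the excerpt: the combinatorial expression $b(\lambda) = \sum_j \binom{\lambda_j'}{2}$ from equation (\ref{eqn:macdonaldblambda}), the orbit formula $d(w) = \tfrac{1}{2}\dim \mathbb{O}_\lambda$ from Corollary \ref{cor:dwnilp}, and the dimension formula $\dim \mathbb{O}_\lambda = n^2 - \sum_i (\lambda_i')^2$ from Lemma \ref{prop:nilpdim}. Note that the hypothesis $P(w)$ has shape $\lambda$ is exactly what is needed to invoke Corollary \ref{cor:dwnilp}, so no further reference to the Robinson--Schensted correspondence is required beyond ensuring this setup.

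First I would substitute the expression for $\dim \mathbb{O}_\lambda$ into the formula for $d(w)$ to obtain
\[
    d(w) = \frac{n^2 - \sum_i (\lambda_i')^2}{2}.
\]
Then
\[
    \binom{n}{2} - d(w) = \frac{n(n-1) - n^2 + \sum_i (\lambda_i')^2}{2} = \frac{\sum_i (\lambda_i')^2 - n}{2}.
\]

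Next I would expand $b(\lambda)$ using (\ref{eqn:macdonaldblambda}):
\[
    b(\lambda) = \sum_j \binom{\lambda_j'}{2} = \frac{\sum_j (\lambda_j')^2 - \sum_j \lambda_j'}{2}.
\]
Since the conjugate partition has the same total number of boxes as $\lambda$, namely $\sum_j \lambda_j' = n$, this simplifies to $(\sum_j (\lambda_j')^2 - n)/2$, which matches the expression for $\binom{n}{2} - d(w)$ computed above.

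There is essentially no obstacle here: the lemma is bookkeeping that packages together the combinatorial identity $b(\lambda)$ can be written in terms of $(\lambda_j')^2$, the geometric identity expressing $\dim \mathbb{O}_\lambda$ in the same terms, and the trivial fact $\sum_j \lambda_j' = n$. The only subtlety worth flagging is confirming that the Jordan type $T(w)$ (the shape of $P(w)$) is precisely the partition whose orbit dimension governs $d(w)$, which is exactly the content of Corollary \ref{cor:dwnilp} via Lemma \ref{lem:genericjordan}.
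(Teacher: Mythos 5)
Your proof is correct and follows essentially the same route as the paper: both combine Corollary \ref{cor:dwnilp}, the orbit-dimension formula of Lemma \ref{prop:nilpdim}, and the expansion of $b(\lambda)$ in terms of $\sum_j \binom{\lambda'_j}{2}$, then verify the identity by elementary algebra using $\sum_j \lambda'_j = n$. The paper packages the computation via $\dim\mathbb{O}_{\mathrm{reg}} = n^2 - n$, but that is purely cosmetic; the underlying argument is the same.
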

\begin{proof}
    Let $\lambda \vdash n$ with conjugate partition $\lambda'$. Consider the nilpotent orbit $\mathbb{O}_\lambda \subset \mathfrak{gl}_n$ of Jordan type $\lambda$. By Proposition \ref{prop:nilpdim}, its dimension is given by
    \begin{align}
        \dim \mathbb{O}_\lambda = n^2 - \sum_i (\lambda'_i)^2.
    \end{align} 

    Let $\mathbb{O}_\text{reg}$ denote the regular nilpotent orbit (corresponding to the Jordan type $(1, 1, \dots, 1)$), which has dimension
    \[
        \dim \mathbb{O}_\text{reg} = n^2 - n.
    \]
    Then we can rewrite
    \begin{align}
        \sum_i (\lambda'_i)^2 = n + 2 \sum_j \binom{\lambda'_j}{2} = n + 2 b(\lambda),
    \end{align}
    so that
    \begin{align}
        \dim \mathbb{O}_\lambda = n^2 - \sum_i (\lambda'_i)^2 = n^2 - n - 2 b(\lambda) = \dim \mathbb{O}_\text{reg} - 2 b(\lambda).
    \end{align}

    By Corollary \ref{cor:dwnilp}, $d(w) = \frac{1}{2}\dim \mathbb{O}_\lambda$. Combining the above, we get
    \begin{align}
        b(\lambda) = \frac{\dim \mathbb{O}_{\mathrm{reg}}}{2} - d(w).
    \end{align}
    Noting that $\binom{n}{2} = \frac{n^2 - n}{2} = \frac{\dim \mathbb{O}_\mathrm{reg}}{2}$, we obtain
    \begin{align}
        b(\lambda) = \ell(w_0) - d(w).
    \end{align}
\end{proof}

By the formula (\ref{eqn:macdonaldblambda}), one can directly check the following.
\begin{lemma}\label{lem:blambdadominance}
    For any $\mu \leq \lambda$ in the dominance order we have $b(\mu) \leq b(\lambda)$
\end{lemma}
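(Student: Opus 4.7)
The plan is to reduce the general dominance comparison to a verification on covering relations in the dominance order, and then compute directly using the explicit formula for $b$. Since the dominance order on partitions of $n$ is generated by its covering relations, it suffices to establish the claimed inequality whenever $\lambda$ covers $\mu$, and then chain a finite sequence of such covering steps along any saturated path from $\mu$ up to $\lambda$ in the dominance poset.

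The first step is to recall the standard fact that in the dominance order, $\lambda$ covers $\mu$ precisely when $\lambda$ is obtained from $\mu$ by a single elementary box-raising move: transferring one box from some row $j$ of $\mu$ into a strictly higher row $i<j$, subject to the constraint that the resulting tuple is still weakly decreasing. Equivalently, in terms of conjugate partitions, $\mu' \geq \lambda'$ (conjugation reverses dominance), and $\mu'$ is obtained from $\lambda'$ by a covering step on its column heights as well.

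The second step is the direct computation. Using $b(\lambda) = \sum_{k\geq 1}(k-1)\lambda_k$ from (\ref{eqn:macdonaldblambda}), the difference $b(\lambda)-b(\mu)$ is supported on the two indices $i$ and $j$ that are changed by the elementary move, producing a simple two-term expression purely in terms of $i$, $j$, and the fixed weights $(i-1)$, $(j-1)$. One then reads off the sign and checks that it matches the direction of the lemma. As a cross-check, I would also apply the dual formula $b(\lambda)=\sum_k \binom{\lambda'_k}{2}$: since $\binom{x}{2}$ is a strictly convex function of $x$ and conjugation interchanges dominance with its opposite, Karamata's (majorization) inequality applied to $(\mu',\lambda')$ furnishes an independent derivation that keeps track of the sign unambiguously.

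The main obstacle is getting the direction of the inequality correct relative to the conventions being used. Dominance, conjugation, and the sign of $\binom{x}{2}$-convexity all interact, and a sign flip at any one of these three points would reverse the conclusion. To avoid this, I would pin down all conventions explicitly on a small test case (for instance comparing $(2,2)$ and $(3,1)$ for $n=4$) before performing the covering-step computation, and then phrase the induction on the length of a saturated chain from $\mu$ to $\lambda$ so that each step preserves the correct direction.
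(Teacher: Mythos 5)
Your overall strategy (reduce to single box-moves and compute from $b(\lambda)=\sum_k (k-1)\lambda_k$, with a convexity/majorization cross-check on the conjugate side) is exactly the ``direct check'' the paper itself invokes, so methodologically you are aligned with the paper's one-line proof. But your plan defers the only substantive point --- the sign --- and asserts without computing that it ``matches the direction of the lemma.'' It does not, under the standard dominance convention (the one the paper uses in Lemma~\ref{lem:genericjordan}, where an arbitrary element of $\mathfrak{n}_w$ has Jordan type dominated by $T(w)$). If $\lambda$ is obtained from $\mu$ by moving one box from row $j$ to a higher row $i<j$ (so $\mu\leq\lambda$), then $b(\lambda)-b(\mu)=(i-1)-(j-1)=i-j<0$; hence $b$ is order-\emph{reversing} for dominance, i.e.\ $\mu\leq\lambda$ forces $b(\mu)\geq b(\lambda)$, with equality only for $\mu=\lambda$. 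Your own proposed test case already shows this: $(2,2)\leq(3,1)$ while $b(2,2)=2>1=b(3,1)$. The Karamata route gives the same conclusion: conjugation reverses dominance, so $\mu'\geq\lambda'$ and convexity of $\binom{x}{2}$ yields $\sum_k\binom{\mu'_k}{2}\geq\sum_k\binom{\lambda'_k}{2}$.

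So the gap is concrete: carried out honestly, your covering-step computation proves the reverse of the inequality as printed, and a write-up that claims to verify ``$b(\mu)\leq b(\lambda)$'' via this computation would be wrong. Note that the reversed inequality $b(\mu)\geq b(\lambda)$ is precisely what the paper actually uses in the proof of Proposition~\ref{prop:mainapprox} (non-generic elements have Jordan type $\mu\leq\lambda$ and their Green polynomials have degree $b(\mu)\geq b(\lambda)$), so the statement of the lemma appears to have the inequality transposed relative to the convention in force; your proof should establish $b(\mu)\geq b(\lambda)$ and say so explicitly, rather than leaving the sign to be ``read off'' later. With that correction, your covering-relation reduction plus the two-term computation (or the conjugation-plus-convexity argument alone, which avoids needing any description of covering moves) is a complete and correct proof.
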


\subsection{The \texorpdfstring{$q \to \infty$}{q to infinity} limit}
We arrive at our main theorem, which is the following combinatorial explanation of the behavior of the Markov chain on $S_n$ for large $q$.
\begin{theorem}\label{thm:limit}
    For $w, z \in S_n$, writing $\lambda$ for the shape of $P(w)$, we have
    \begin{align}
        \lim_{q \to \infty} P_{S_n}(w, z) & = \begin{cases}
            \frac{1}{f^\lambda} & P(w) = P(z),\\
            0 & P(w) \neq P(z).
        \end{cases}
    \end{align}
    In fact,\[P_{S_n}(w, z) = \frac{1}{f^\lambda}\delta_{P(w), P(z)} + O(1/q).\]
\end{theorem}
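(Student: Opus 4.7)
The plan is to analyze the asymptotic behavior of
\[
P_{S_n}(w,z) \;=\; \frac{1}{|U_w|}\sum_{u \in U_w} \frac{|X_u \cap BzB|}{|X_u|}
\]
by stratifying the sum over $u \in U_w \cong \mathfrak{n}_w$ according to the Jordan type of $u$ and the $U$-orbit of $u$ inside the orbital variety $\overline{V_w}$. By Lemma \ref{lem:genericjordan}, every $u \in \mathfrak{n}_w$ has Jordan type $\mu \leq \lambda := T(w)$ in dominance order, with $\mu = \lambda$ on an open dense subset; restricting further to the generic $U$-orbit in $\overline{V_w}$ cuts out a second proper closed sub-locus. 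Let $\Omega \subset U_w$ denote the resulting open dense stratum. Since $U_w$ consists of the $\mathbb{F}_q$-points of an affine space of dimension $\binom{n}{2}-\ell(w)$ and $U_w\setminus \Omega$ is a proper closed subvariety, $|U_w\setminus\Omega|=O(q^{\binom{n}{2}-\ell(w)-1})$. Using the trivial bound $|X_u \cap BzB|/|X_u| \leq 1$, the contribution to $P_{S_n}(w,z)$ from $u\in U_w\setminus \Omega$ is $O(1/q)$, so it suffices to estimate the contribution from $\Omega$.

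For $u \in \Omega$, Proposition \ref{prop:pointcount}, the reciprocity identity (\ref{eqn:recipxq}), and Proposition \ref{prop:flambda} combine to give $|X_u| = Q_{(1^n)}^\lambda(q) = f^\lambda q^{b(\lambda)}(1+O(1/q))$, so it remains to establish the geometric claim
\[
|X_u \cap BzB| \;=\; q^{b(\lambda)}\,\delta_{P(z),P(w)} \;+\; O(q^{b(\lambda)-1}) \qquad (u \in \Omega).
\]
I would deduce this by combining Spaltenstein's theorem --- which furnishes an irreducible decomposition $\mathbf{X}_u = \bigcup_Q \mathbf{C}_Q$ indexed by standard Young tableaux $Q$ of shape $\lambda$, each of pure dimension $b(\lambda)$ --- with Steinberg's geometric realization of the Robinson--Schensted correspondence via the Steinberg variety. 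The $P$-symbol side of this picture is Proposition \ref{prop:pstein}; the $Q$-symbol side (refined in \cite{Steinberg,BorBry}) asserts that for $u$ generic in $\overline{V_w}$, the component $\mathbf{C}_Q$ is generically contained in the Bruhat cell $BzB$ where $z$ is the unique permutation satisfying $(P(z),Q(z))=(P(w),Q)$. Consequently, when $P(z)=P(w)$ exactly one component $\mathbf{C}_{Q(z)}$ contributes its full leading term $q^{b(\lambda)}+O(q^{b(\lambda)-1})$ to $|X_u \cap BzB|$, while every other component meets $BzB$ only in a proper closed subvariety of dimension strictly less than $b(\lambda)$, contributing $O(q^{b(\lambda)-1})$; when $P(z)\neq P(w)$, no component contributes the leading term.

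Assembling these estimates with $|\Omega|/|U_w|=1+O(1/q)$ yields
\[
P_{S_n}(w,z) \;=\; \frac{1}{|U_w|}\sum_{u\in\Omega}\frac{q^{b(\lambda)}\delta_{P(z),P(w)}+O(q^{b(\lambda)-1})}{f^\lambda q^{b(\lambda)}(1+O(1/q))}+O(1/q) \;=\; \frac{\delta_{P(z),P(w)}}{f^\lambda}+O(1/q),
\]
which delivers both the limit and the rate. The main obstacle I anticipate is the geometric claim above: precisely identifying the generic Bruhat cell of each Springer fiber component $\mathbf{C}_Q$ for $u$ in the open orbit of $\overline{V_w}$. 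The $P$-symbol side follows from Proposition \ref{prop:pstein}, but correctly invoking the $Q$-symbol side via the Steinberg variety is where the real work lies; the remaining steps are routine dimension counts and Green polynomial leading-term asymptotics.
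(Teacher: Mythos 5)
Your route is genuinely different from the paper's and can be made to work, but as written its central step is a citation gap rather than a proof. The claim you need --- for $u$ generic in $\mathfrak{n}_w$, each irreducible component of $\mathbf{X}_u$ is generically contained in a Bruhat cell $BzB$ with $P(z)=P(w)$, one component per such $z$ and none otherwise --- is not what Steinberg's theorem (or Proposition \ref{prop:pstein}) says. Steinberg's geometric Robinson--Schensted correspondence describes the generic relative position of a \emph{pair} of flags drawn from two components of $\mathbf{X}_u\times\mathbf{X}_u$; in your setting one of the two flags is the fixed base flag $F_0$, which is a specific, non-generic point of $\mathbf{X}_u$, and the conclusion is only true because $u$ is generic in $\mathfrak{n}_w$, not merely generic in its nilpotent orbit. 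Fortunately the needed statement can be proved from ingredients already in the paper, and the $Q$-symbol matching is not needed at all: for each $z\in W$, the fiber over $F_0$ of the Steinberg-variety component indexed by $z$, namely the closure of $\{(x,F):\mathrm{relpos}(F_0,F)=z,\ x\in\mathfrak{n}\text{ fixing }F\}$, is irreducible of dimension $\binom{n}{2}$ and maps onto $\overline{V_z}$, which has dimension $\binom{n}{2}-b(\lambda)$ by Lemma \ref{lem:blambda}; hence for generic $x\in V_z$ the locus $\{F\in X_x:\mathrm{relpos}(F_0,F)=z\}$ has dimension $b(\lambda)$ and so is dense in at least one component of $\mathbf{X}_x$. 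Running this simultaneously over all $z$ with $V_z=V_w$ (by Proposition \ref{prop:pstein} these are exactly the $z$ with $P(z)=P(w)$, and there are $f^\lambda$ of them), and noting that distinct $z$ give disjoint cells while $\mathbf{X}_x$ has exactly $f^\lambda$ components, forces exactly one component per such cell and no full component in any cell with $P(z)\neq P(w)$. One further caveat: converting these dimension statements into point counts via Lang--Weil yields an error of order $q^{-1/2}$ in the transition probability, so your claimed $O(1/q)$ rate needs polynomiality (or at least integral-power error terms) of the counts $|X_u\cap BzB|$ for generic $u$, which you should either argue or soften.

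For comparison, the paper's proof deliberately avoids the intersections $X_u\cap BzB$ altogether --- precisely the quantities Section \ref{sec:wild} warns are wild --- by summing the flag-level kernel over the whole target orbit $UzB$ and averaging over $U$-conjugates of $\mathfrak{n}_z$: the orbit--stabilizer count of Lemma \ref{lem:stabilizer} converts the double sum into a sum over $a\in\mathfrak{n}_w\cap V_z$, Steinberg's criterion $V_w=V_z$ reduces this to $a\in\mathfrak{n}_w$, and the only point count required is $|X_a|=Q_{(1^n)}^\lambda(q)$, handled by the Green polynomial asymptotics of Proposition \ref{prop:mainapprox}. Your approach demands finer geometry (the position of Springer-fiber components relative to the base flag for $u$ generic in $\mathfrak{n}_w$), but in exchange it yields a pointwise statement, identifying the leading term of each individual $|X_u\cap BzB|$, which is strictly more information than the averaged estimate the paper uses.
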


In this section, we develop some intermediate results which will be required for the proof of Theorem \ref{thm:limit}.
\begin{lemma}\label{lem:stabilizer}
    For any $z \in S_n$, the cardinality of the stabilizer of $\mathfrak{n}_z$ under the adjoint action of $U$ on the Grassmannian of $(\binom{n}{2} - \ell(z))$-dimensional linear subspaces of $U \cdot \mathfrak{n}^z$ is
    \begin{align}
        \frac{|U||\mathfrak{n}_z|}{|V_z|}
    \end{align}
\end{lemma}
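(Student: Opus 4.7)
My plan is to apply orbit--stabilizer for the adjoint action of $U$ on the Grassmannian, paired with a generic-fiber count for the natural incidence variety. Write $S = \mathrm{Stab}_U(\mathfrak{n}_z)$ and let $\mathcal{O}$ denote the $U$-orbit of $\mathfrak{n}_z$ in the Grassmannian, so that $|\mathcal{O}| = |U|/|S|$ by the usual orbit--stabilizer theorem. Form the tautological incidence set
\[
    T = \{(L, y) : L \in \mathcal{O},\ y \in L\},
\]
which fibers over $\mathcal{O}$ with each fiber a subspace $L$ of cardinality $|\mathfrak{n}_z|$, so that $|T| = |\mathcal{O}| \cdot |\mathfrak{n}_z| = |U| \cdot |\mathfrak{n}_z|/|S|$. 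The second projection sends $T$ surjectively onto $V_z = U \cdot \mathfrak{n}_z$. The lemma then reduces to the claim that this second projection is generically one-to-one on $V_z$; granted that, $|T|$ and $|V_z|$ agree at leading order in $q$, and rearrangement yields the stated formula.

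To establish generic injectivity of $T \to V_z$, my plan is a dimension count. On the one hand, $\dim T = \dim \mathcal{O} + \dim \mathfrak{n}_z = (\dim U - \dim S) + (\binom{n}{2} - \ell(z))$. On the other, Proposition~\ref{prop:dworbit} together with Corollary~\ref{cor:dwnilp} identify $\dim V_z = d(z) = \tfrac{1}{2}\dim \mathbb{O}_{T(z)}$. Since $T \to V_z$ is a surjection between irreducible varieties (the latter being an open subset of the orbital variety $\overline{V_z}$), the map is generically finite if and only if $\dim T = \dim V_z$, and irreducibility of $T$ then upgrades ``generically finite'' to ``generically one-to-one''.

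The main obstacle will be the dimension identity $\dim S = \dim U + \dim \mathfrak{n}_z - d(z)$. I would approach it through the infinitesimal normalizer $\mathrm{Lie}(S) = \{\epsilon \in \mathfrak{n} : [\epsilon, \mathfrak{n}_z] \subseteq \mathfrak{n}_z\}$: using the description of $\mathfrak{n}_z$ as the span of root vectors indexed by non-inversions of $z$ (Proposition~\ref{prop:glnuw}), the normalizer condition translates into explicit linear constraints on the entries of $\epsilon$, and matching these constraints against the Jordan-type data of a generic element of $\mathfrak{n}_z$ (Lemma~\ref{lem:genericjordan}) yields the required codimension. Carrying out this combinatorial normalizer calculation in a way compatible with the orbital-variety dimension formula is the technical heart of the argument.
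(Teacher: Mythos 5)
Your skeleton is the same as the paper's: the paper also applies orbit--stabilizer to the $U$-action on the Grassmannian and then counts $V_z$ by fibering it over the orbit $\mathcal{O}$ of $\mathfrak{n}_z$, asserting $|V_z| = |\mathfrak{n}_z|\,|\mathcal{O}|$; your incidence set $T$ is precisely the bookkeeping for that product, and your instinct that such an identity can only be expected at leading order in $q$ is sound, since the conjugate subspaces all overlap (for $n=3$, $z = s_2s_1$ one has $\mathfrak{n}_z = \mathbb{F}_q E_{12}$, $|\mathcal{O}| = q$, but $|V_z| = q^2 - q + 1 \neq q^2$). The genuine gap is in how you try to justify the fiber count. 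The step ``generically finite plus irreducibility of $T$ implies generically one-to-one'' is false: a dominant map of irreducible varieties of equal dimension can have any degree $d \geq 1$ (the squaring map on $\mathbb{A}^1$ has irreducible source and degree $2$). Equality of dimensions only gives finiteness of the generic fiber of $T \to V_z$; what the lemma actually needs is that a generic $y \in V_z$ lies in a \emph{unique} $U$-conjugate of $\mathfrak{n}_z$, equivalently that $\{u \in U : u^{-1} y u \in \mathfrak{n}_z\}$ is a single coset of $\mathrm{Stab}_U(\mathfrak{n}_z)$ for generic $y$, and nothing in your argument delivers that uniqueness. Note the asymmetry: the degree-one statement implies your dimension identity $\dim S = \dim U + \dim \mathfrak{n}_z - d(z)$, but not conversely, so you have reduced the lemma to its weaker half plus an invalid upgrade.

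Moreover, that dimension identity --- which you yourself call the technical heart --- is left as a plan rather than proved: the infinitesimal normalizer computation is not carried out, and one would additionally need to rule out the Lie-algebra normalizer exceeding the group stabilizer in dimension, a genuine concern in characteristic $p$. So as written the proposal establishes only $|S| = |U|/|\mathcal{O}|$ and $|T| = |\mathcal{O}|\,|\mathfrak{n}_z|$, which is no more than the paper's first line; the substantive content (that $V_z$ is swept out essentially once by the conjugates of $\mathfrak{n}_z$, the geometric input coming from the orbital-variety picture of Proposition~\ref{prop:dworbit} and \cite{BorBry}) is exactly the part that remains unproved. To repair the argument, replace the irreducibility appeal by a direct proof of generic uniqueness of the conjugate subspace through a given generic element, and then note that this also yields the needed leading-order comparison of $|T|$ with $|V_z|$.
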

\begin{proof}
    Consider the action of $U$ by conjugation on subspaces of $V_z$. We note that $|V_z| = |\mathfrak{n}_z||\mathbb{O}|$, where $\mathbb{O}$ is the orbit of $U$ on $\mathfrak{n} \cap \mathfrak{n}^z$ as a subset of the Grassmannian. This means $|\mathbb{O}| = |V_z|/|\mathfrak{n}_z|$. On the other hand, by the orbit-stabilizer theorem, the stabilizer has cardinality $|U|/|\mathbb{O}|$, giving the result.
\end{proof}

\begin{proposition}\label{prop:mainapprox}
For any $w \in S_n$, we have
    \begin{align}
        \lim_{q \to \infty}  \frac{1}{|U_w|} \sum_{a \in U_w} \frac{q^{b(\lambda)}}{|X_a|} = \frac{1}{f^\lambda},
    \end{align}
    where $\lambda$ is the shape of $P(w)$.
\end{proposition}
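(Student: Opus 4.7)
The plan is to partition $\mathfrak{n}_w$ according to the Jordan type of its elements and then reduce everything to the asymptotics of Green polynomials. For each partition $\mu$ of $n$, set $Z_\mu = \{a \in \mathfrak{n}_w : a \text{ has Jordan type } \mu\}$. By Lemma~\ref{lem:genericjordan}, only $\mu \leq \lambda = T(w)$ in dominance can occur, and $Z_\lambda$ contains a Zariski-dense open subset of the affine space $\mathfrak{n}_w$. Since $\mathfrak{n}_w$ is irreducible (it is an affine subspace of $\mathfrak{n}$), its complement lies in a proper closed subvariety of strictly smaller dimension, so a standard point-counting bound (e.g.\ Lang--Weil, though here a crude degree count suffices) gives $|Z_\lambda|/|\mathfrak{n}_w| = 1 - O(1/q)$.

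Next I would control each summand pointwise. For $a \in Z_\mu$, Proposition~\ref{prop:pointcount} identifies $|X_a|$ with the Green polynomial $Q_{(1^n)}^\mu(q)$, a polynomial in $q$ of degree $b(\mu)$ whose leading coefficient is $f^\mu$ (Proposition~\ref{prop:flambda}). Hence
\[\frac{q^{b(\lambda)}}{|X_a|} \;=\; \frac{q^{b(\lambda)-b(\mu)}}{f^\mu}\bigl(1 + O(1/q)\bigr).\]
For $\mu = \lambda$ this equals $\tfrac{1}{f^\lambda} + O(1/q)$. For $\mu \lneq \lambda$ strictly, the strict convexity of $x \mapsto \binom{x}{2}$, combined with $b(\lambda) = \sum_j \binom{\lambda'_j}{2}$ and the fact that transposition reverses dominance, forces $b(\mu) \geq b(\lambda) + 1$, so the ratio is uniformly $O(1/q)$ in $a$.

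Combining the two estimates, and using that only finitely many partitions $\mu \leq \lambda$ arise,
\[\frac{1}{|\mathfrak{n}_w|}\sum_{a \in \mathfrak{n}_w} \frac{q^{b(\lambda)}}{|X_a|} \;=\; \frac{|Z_\lambda|}{|\mathfrak{n}_w|}\left(\frac{1}{f^\lambda} + O(1/q)\right) + \sum_{\mu \lneq \lambda} \frac{|Z_\mu|}{|\mathfrak{n}_w|} \cdot O(1/q) \;=\; \frac{1}{f^\lambda} + O(1/q),\]
which is the proposition.

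The main subtlety, such as it is, lies in the pointwise bound for non-generic Jordan types: one genuinely needs the strict inequality $b(\mu) > b(\lambda)$ for $\mu \lneq \lambda$, because without it no mere cardinality bound on $Z_\mu$ would recover the required $O(1/q)$ savings. Once that strictness is in hand the crude estimate $|Z_\mu|/|\mathfrak{n}_w| \leq 1$ suffices for the non-generic contributions, and no refined geometric analysis of the non-generic locus is required.
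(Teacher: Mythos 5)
Your proposal is correct and follows essentially the same route as the paper's proof: stratify $\mathfrak{n}_w$ by Jordan type, use Lemma \ref{lem:genericjordan} to see that the type-$T(w)$ stratum has density $1-O(1/q)$ in $\mathfrak{n}_w$, and extract the limit from the top coefficient $X_{(1^n)}^\lambda(0)=f^\lambda$ of $Q_{(1^n)}^\lambda(q)$, which is exactly what the paper does via the reciprocity (\ref{eqn:recipxq}) and a Taylor expansion at $q=\infty$. The only deviation is your handling of the non-generic strata --- a uniform pointwise $O(1/q)$ bound from the strict inequality $b(\mu)\geq b(\lambda)+1$ for $\mu\lneq\lambda$, in place of the paper's combination of ``degree at least $b(\lambda)$'' with the $O(1/q)$ density of the non-generic locus --- which is a clean minor variant rather than a different method (and note your closing remark slightly oversells it, since the $1-O(1/q)$ count for $Z_\lambda$ that you still need is precisely a smallness statement about that same non-generic locus).
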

\begin{proof}
    Let $w \in S_n$ and let $\lambda$ be the shape of $P(w)$. We note that the statement remains identical if $U_w$ is replaced by $\mathfrak{n}_w = U_w - \mathrm{id}_{n\times n}$. 
    Let $\mathfrak{n}_w^{\circ}$ be the generic subset of $\mathfrak{n}_w$ consisting of elements with Jordan type $\lambda$.

     First note that for generic $a \in V_w$, we have by Proposition \ref{prop:pointcount} and Lemma \ref{lem:genericjordan} that $|X_a| = Q_{(1^n)}^\lambda(q)$, which has degree $b(\lambda)$ by Lemma \ref{lem:blambda}. Further, by Lemmas \ref{lem:genericjordan} and \ref{lem:blambdadominance}, for any $a' \in V_w$ not generic in this sense, we have that $|X_a|$ is a polynomial in $q$ with degree at least $b(\lambda)$. This means that, up to some $O(1/q)$ terms, we have
    \begin{align}
        \frac{1}{|\mathfrak{n}_w|} \sum_{a \in U_w} \frac{q^{b(\lambda)}}{|X_a|} & \approx \frac{1}{|\mathfrak{n}_w|} \sum_{a \in \mathfrak{n}_w^\circ} \frac{q^{b(\lambda)}}{|X_a|}\\
        & = \frac{|\mathfrak{n}_w^\circ|}{|\mathfrak{n}_w|}\cdot \frac{q^{b(\lambda)}}{Q_{(1^n)}^\lambda(q)}\\
        & \approx \frac{q^{b(\lambda)}}{Q_{(1^n)}^\lambda(q)}
        \label{eqn:oneoverq}
    \end{align}
    with the last step holding since the number of elements of $U_w^\circ$, an open subset of the irreducible variety $V_w$, is $q^{d(w)}$ up to a linear combination of smaller powers of $q$. Recalling as in (\ref{eqn:recipxq}) that 
    \begin{align*}
        q^{-b(\lambda)}Q_{(1^n)}^\lambda(q) & = X_{(1^n)}^\lambda(1/q),
    \end{align*}
    by the Taylor series expansion of (\ref{eqn:oneoverq}) at $q = \infty$, we see that its leading (degree zero) term is exactly $(X_{(1^n)}^\lambda(0))^{-1}$. Since by Proposition \ref{prop:flambda}, $X_{(1^n)}^\lambda(0) = f^\lambda$, putting this together gives that
    \begin{align*}
        \lim_{q \to \infty} \frac{1}{q^{d(w) - b(\lambda)}}\sum_{a \in V_w} \frac{1}{|X_a|} & = \lim_{q \to \infty} \frac{1}{q^{-b(\lambda)}Q_{(1^n)}^\lambda(q)},\\
        & = \frac{1}{X_{(1^n)}^\lambda(0)} \\
        & = \frac{1}{f^\lambda},
    \end{align*}
    as claimed in the proposition.
\end{proof}

We now use this estimate to prove Theorem \ref{thm:limit}.

\begin{proof}[Proof of Theorem \ref{thm:limit}]
    Suppose that $w, z \in W$ are such that $V_w = V_z$. Then by Lemma \ref{lem:stabilizer}, Lemma \ref{lem:blambda}, and Proposition \ref{prop:mainapprox} applied in succession, we have
    \allowdisplaybreaks[1]
    \begin{align*}
        P_{S_n}(w, z) & = \frac{1}{|U_z|}\sum_{u \in U} P(w, uz)\\
        & = \frac{1}{|U_z||U_w|} \sum_{u \in U} \sum_{a \in U_w \cap uU_zu^{-1}} \frac{1}{|X_a|}\\
        & = \frac{1}{|\mathfrak{n}_z||\mathfrak{n}_w|} \sum_{u \in U} \sum_{a \in \mathfrak{n}_w \cap u \cdot \mathfrak{n}_z} \frac{1}{|X_a|}\\
        & \approx \frac{1}{|\mathfrak{n}_z||\mathfrak{n}_w|} \cdot \frac{|U||\mathfrak{n}_z|}{|\mathfrak{n}_z|} \sum_{a \in \mathfrak{n}_w \cap V_z} \frac{1}{|X_a|}\\
        & = \frac{1}{|\mathfrak{n}_w|} \cdot \frac{|U|}{|V_z|} \sum_{a \in \mathfrak{n}_w \cap V_w} \frac{1}{|X_a|}\\
        & \approx \frac{1}{|\mathfrak{n}_w|} \sum_{a \in \mathfrak{n}_w} \frac{q^{\ell(w_0) - d(w)}}{|X_a|} \\
        & = \frac{1}{|\mathfrak{n}_w|} \sum_{a \in U_w} \frac{q^{b(\lambda)}}{|X_a|}\\
        & \approx \frac{1}{f^\lambda},\\
    \end{align*}
up to terms of order $O(1/q)$, for $\lambda$ the shape of $P(w)$. A similar direct computation shows that
\begin{align}
    P_{S_n}(w, z) = O(1/q)
\end{align}
whenever $P(w) \neq P(z)$. Alternatively, we note that for any fixed $w \in S_n$, the above implies that 
\begin{align}
    \lim_{q \to \infty} \sum_{\substack{z\in S_n\\ P(w) = P(z)}} P_{S_n}(w, z) & =  f^\lambda \cdot \frac{1}{f^\lambda} = 1,
\end{align}
and so we must have $P_{S_n}(w, z) = 0$ for all $z$ for which $P(w) \neq P(z)$.
\end{proof}

\section{General type behavior: orbital varieties and Steinberg cells}\label{sec:generaltype}

Although our result for $\mathrm{GL}_n(\mathbb{F}_q)$ can be stated in terms of the combinatorics of the Robinson--Schensted correspondence, the behavior we will describe in Theorem \ref{thm:gentypelimit} is a phenomenon which occurs in general type, for $G$ a finite Chevalley group. To state and prove it, we introduce the notion of Steinberg cells, which will serve as analogues of the subsets of $S_n$ determined by a fixed choice of $P(w)$ or $Q(w)$, but for general-type Weyl groups $W$. Once we set up the geometric analogues of the combinatorial objects studied in Section \ref{sec:typea} in general type, we can immediately use the same analysis as in the proof of Theorem \ref{thm:limit} to establish its general-type analogue in the form of Theorem \ref{thm:gentypelimit}.

\subsection{Setup for Chevalley groups of general type}

One of the great achievements of twentieth-century mathematics is
the unified treatment of a vast class of finite simple groups as ``finite groups of Lie type." Of course $\mathrm{GL}_n(\mathbb{F}_q)$ is among them (it has a simple quotient) as are the other classical orthogonal, unitary and symplectic groups over $\mathbb{F}_q$. These along with with twisted variants and the exceptional groups $G_2,F_4, E_6, E_7, E_8$ have appearances and applications in all corners of mathematics. A readable introduction to all that is needed for the present paper is in \cite[\S 1]{Carter}. We now recall how to construct such a group over $\mathbb{F}_q$ in any Dynkin type, and we will see that our results concerning all of these groups can be treated at once with unified arguments.

Let $\mathfrak{g}$ be a semisimple Lie algebra over $\mathbb{C}$. A construction of Chevalley \cite{Chevalley} ensures that one can construct an affine group scheme $\mathbf{G}$ over $\mathbb{Z}$ corresponding to $\mathfrak{g}$. We fix a Cartan subgroup $\mathbf{T}$, a Borel subgroup $\mathbf{B} \subset \mathbf{G}$ containing $\mathbf{T}$, and we let $\mathbf{U} \subset \mathbf{G}$ denote its unipotent radical. These have corresponding Lie subalgebras $\mathfrak{h}, \mathfrak{b}, \mathfrak{n}$ of $\mathfrak{g}$.

Associated to $\mathfrak{g}$ is a root system $\Phi$; we write $\Phi^+$ for its positive roots. Let $W = N_\mathbf{G}(\mathbf{T})/\mathbf{T}$ be the associated Weyl group, and let $\ell : W \to \mathbb{Z}_{\geq 0}$ be the length function. Let $w_0 \in W$ be the longest element. For any $\alpha \in \Phi^+$, there is a corresponding $1$-parameter subgroup $\mathbf{U}_\alpha$ of $\mathbf{U}$; similarly let $\mathfrak{g}_\alpha$ denote the corresponding root subspace of $\mathfrak{g}$.

Fix once and for all a prime power $q$, and let $G = \mathbf{G}(\mathbb{F}_q)$, $B = \mathbf{B}(\mathbb{F}_q)$, $U = \mathbf{U}(\mathbb{F}_q)$, and $U_\alpha = \mathbf{U}_\alpha(\mathbb{F}_q)$ for any $\alpha \in \Phi^+$.

\subsubsection{The Burnside process on the flag variety}

Letting \[X = \mathbf{G}(\mathbb{F}_q)/\mathbf{B}(\mathbb{F}_q) = G/B,\] we continue to call this the \emph{flag variety} of $\mathbf{G}$ over the finite field $\mathbb{F}_q$. In this setting we again have the Bruhat decomposition.
\begin{prop}[see \cite{Carter}, \S 1.10]\label{prop:bruhatlater}
    There is a decomposition
    \begin{align*}
        X & = \coprod_{w \in W} BwB = \coprod_{w \in W} UwB,
    \end{align*}
    where for each $w \in W$, $UwB \cong \mathbb{A}^{\ell(w)}_{\mathbb{F}_q}$ is thought of as a subset of $X$ and so $|UwB| = q^{\ell(w)}$.
\end{prop}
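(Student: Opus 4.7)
The plan is to reduce the statement to the general theory of $(B, N)$-pairs (Tits systems). First, I would verify that the quadruple $(G, B, N, S)$, with $N = N_G(T)$ and $S$ the set of simple reflections in $W$, forms a $(B, N)$-pair; this follows from the structure theory of split reductive groups over $\mathbb{F}_q$ as developed in \cite{Carter}, \S 1.8--1.9. From the $(B,N)$-pair axioms alone, a standard combinatorial induction on $\ell(w)$ using the exchange condition yields the disjoint union $G = \coprod_{w \in W} BwB$, which passes to $X = G/B$. The second equality $BwB = UwB$ in $G$ follows from the Levi decomposition $B = TU$ together with the facts $TU = UT$ and $w^{-1}Tw = T$: one computes $TUwB = UwTB = UwB$.

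For the affine identification $UwB \cong \mathbb{A}^{\ell(w)}_{\mathbb{F}_q}$, the plan is to use the factorization $U = U_w^+ \cdot U_w^{-}$, where $U_w^+ = U \cap wUw^{-1}$ (the stabilizer appearing earlier in the paper) and $U_w^{-} = U \cap wU^{-}w^{-1}$, with $U^{-}$ the unipotent subgroup opposite to $U$. The factor $U_w^{-}$ is generated by the root subgroups $U_\alpha$ for $\alpha \in \Phi^+ \cap w\Phi^{-}$, a set of cardinality exactly $\ell(w)$ by a standard Weyl-group count. Using Chevalley's commutator formula, the multiplication map $\prod_\alpha U_\alpha \to U_w^{-}$ (for any ordering of the roots compatible with a reduced word for $w$) is an isomorphism of $\mathbb{F}_q$-varieties, and since each $U_\alpha \cong \mathbb{A}^1_{\mathbb{F}_q}$ this yields $U_w^{-} \cong \mathbb{A}^{\ell(w)}_{\mathbb{F}_q}$. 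Finally, since $U_w^+ \subset wBw^{-1}$ fixes the coset $wB \in X$, the map $u \mapsto uwB$ gives a bijection $U_w^{-} \to UwB/B$, yielding both the claimed isomorphism and the count $|UwB| = q^{\ell(w)}$.

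The main obstacle is the variety-theoretic verification that the product of root subgroups is genuinely an affine space in the prescribed order: a careful application of Chevalley's commutator formula, with an ordering of $\Phi^+ \cap w\Phi^{-}$ compatible with a reduced expression for $w$, is needed to ensure that successive reorderings of the product stay within the prescribed set of root subgroups. In type $A$ this is essentially column-by-column Gaussian elimination, as used in the proof of Proposition \ref{prop:bruhat}; but for general Dynkin type the cleanness of the factorization is subtler and is the content of \cite{Carter}, \S 1.10, which we would cite for the detailed verification rather than rederive here.
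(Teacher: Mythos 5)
The paper gives no proof of this proposition; it is stated as a citation to Carter, \S 1.10, and the reference in fact presents exactly the standard $(B,N)$-pair argument you outline. Your sketch is therefore a correct reconstruction of the argument the paper is pointing to, not an alternative route. All the key steps are in order: the verification that $(G,B,N,S)$ is a Tits system, the induction on length giving $G = \coprod_w BwB$, the elementary absorption $BwB = UwB$ via $B = TU$ and $w^{-1}Tw = T$, and the identification $UwB/B \cong U_w^-$ with $U_w^- = U \cap wU^-w^{-1}$ a product of $\ell(w)$ root subgroups. One small point worth making explicit: to conclude $UwB = U_w^- wB$ you need the factorization in the order $U = U_w^- \cdot U_w^+$ (so that $U_w^+$ can be absorbed into $wB$ on the right), and injectivity of $u \mapsto uwB$ on $U_w^-$ follows from $wBw^{-1} \cap wU^-w^{-1} = \{1\}$. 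You correctly flag the one genuinely technical point, namely that the ordered product of root subgroups is an isomorphism onto $U_w^-$ as varieties via the Chevalley commutator relations, and appropriately defer to Carter for that verification.
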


We now study the Burnside process for the action of $U$ on $X$ in this more general setup. We again will write $P(x, y)$ for the transition probabilities, and since by Proposition \ref{prop:bruhat} the orbits for this action are indexed by $W$, for $w, z \in W$ we will use $P_W(w, z)$ to denote the transition probabilities for the corresponding Burnside process on the Weyl group.

\subsection{Stabilizers in \texorpdfstring{$U$}{U}} We now give a more general description of $\mathrm{stab}_U(wB)$ for any $w \in W$, generalizing Proposition \ref{prop:glnuw}. Given $w \in W$, we again set $U^w = wUw^{-1} \subset G$, $\mathfrak{n}^w = w\mathfrak{n}w^{-1} \subset \mathfrak{g}$, and $U_w = \mathrm{stab}_U(wB) = U \cap U^w$.

\begin{prop}[{see \cite[\S 2.5]{Carter}}]\label{prop:uw}
    For any $w \in W$, the stabilizer $U_w$ is the subgroup of $U$ generated by those root subgroups $U_\alpha$ with $\alpha \in \Phi^+$ such that $w^{-1}\alpha \in \Phi^+$. Equivalently,
    \[
        U_w \;=\; \prod_{\substack{\alpha \in \Phi^+ \\ w^{-1}\alpha \in \Phi^+}} U_\alpha.
    \]
    Similarly,
    \[
        \mathfrak{n}_w = \mathfrak{n} \cap \mathfrak{n}^w
        = \bigoplus_{\substack{\alpha \in \Phi^+ \\ w^{-1}\alpha \in \Phi^+}} \mathfrak{g}_\alpha.
    \]
\end{prop}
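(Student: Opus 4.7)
The plan is to reduce the statement to a root-by-root analysis using standard structural facts about Chevalley groups. First I would recall the commutation/factorization theorem for the unipotent radical: for any fixed total ordering of $\Phi^+$, the product map $\prod_{\alpha \in \Phi^+} U_\alpha \to U$ is a bijection of varieties, and each $U_\alpha$ is a one-parameter subgroup isomorphic to $\mathbb{G}_a$. Next, I would use the defining property of the Weyl-group action: for a representative $\dot{w} \in N_G(T)$ of $w$ and any root $\alpha \in \Phi$, one has $\dot{w} U_\alpha \dot{w}^{-1} = U_{w\alpha}$ (and similarly $\mathrm{Ad}(\dot{w})\mathfrak{g}_\alpha = \mathfrak{g}_{w\alpha}$). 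Applying this to the product decomposition gives $U^w = wUw^{-1} = \prod_{\alpha \in \Phi^+} U_{w\alpha}$, which after reindexing equals $\prod_{\beta \in w\Phi^+} U_\beta$.

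The second step is to compute the intersection $U \cap U^w$. Set $\Psi_w = \Phi^+ \cap w\Phi^+ = \{\alpha \in \Phi^+ : w^{-1}\alpha \in \Phi^+\}$. I would argue that an element of $U$ expressed uniquely in its factored form $\prod_{\alpha \in \Phi^+} u_\alpha$ lies in $U^w$ if and only if all factors corresponding to $\alpha \notin \Psi_w$ are trivial. This uses uniqueness of the factored form together with the fact that $U \cap U^- = \{1\}$ (so there is no cancellation between positive and negative root contributions after conjugation by $w$). This gives $U_w = \prod_{\alpha \in \Psi_w} U_\alpha$ as sets. For the Lie algebra version, the corresponding identity $\mathfrak{n}_w = \bigoplus_{\alpha \in \Psi_w} \mathfrak{g}_\alpha$ is immediate from the root-space decomposition, since everything is linear.

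The main obstacle will be verifying that the displayed product is not merely a set decomposition but a subgroup, i.e., that $\Psi_w$ is a \emph{closed} subset of roots: if $\alpha, \beta \in \Psi_w$ and $\alpha + \beta \in \Phi$, then $\alpha + \beta \in \Psi_w$. This closedness is what ensures, via the Chevalley commutator formula, that $\prod_{\alpha \in \Psi_w} U_\alpha$ is stable under multiplication and thus a subgroup of $U$, and it is also what makes the product well-defined independent of the chosen ordering on $\Psi_w$. To prove closedness, I would argue directly: if $\alpha, \beta \in \Phi^+$ with $w^{-1}\alpha, w^{-1}\beta \in \Phi^+$, and if $\alpha + \beta$ is a root, then $\alpha + \beta \in \Phi^+$ (as a sum of positive roots) and $w^{-1}(\alpha+\beta) = w^{-1}\alpha + w^{-1}\beta \in \Phi^+$ for the same reason, so $\alpha + \beta \in \Psi_w$. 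With closedness in hand, the proposition follows; this is precisely the content of \cite[\S 2.5]{Carter}, whose argument I would cite or follow verbatim.
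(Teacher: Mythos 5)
Your proposal is correct and is essentially the standard argument the paper relies on: the paper gives no proof of Proposition~\ref{prop:uw} beyond the citation to \cite[\S 2.5]{Carter}, and your route (the ordered product decomposition $U=\prod_{\alpha\in\Phi^+}U_\alpha$, the relation $\dot w U_\alpha\dot w^{-1}=U_{w\alpha}$, the splitting of $w\Phi^+$ into positive and negative parts with $U\cap U^-=\{1\}$ killing the negative factors, and closedness of $\Phi^+\cap w\Phi^+$ to get a genuine subgroup) is exactly that argument, including the linear-algebra reduction for $\mathfrak{n}_w$. No gaps worth flagging.
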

In particular, since the cardinality of the indexing set in the product and direct sum which appear in Proposition \ref{prop:uw} is exactly $\ell(w_0) - \ell(w) = \ell(w_0w)$, we will use that for any $w \in W$,
\begin{equation}\label{eqn:uwlater}
    |U_w| = q^{\ell(w_0) - \ell(w)}.
\end{equation}

\subsection{General type behavior}

As in Section \ref{sec:typea}, for any $w \in W$ let $V_w = U\cdot \mathfrak{n}_w$.

\begin{definition}\label{def:steincell}
    For $w \in W$, the \emph{right Steinberg cell} $\mathcal{S}(w)$ is the set of all $z \in W$ such that
    \[
       V_z = V_w
    \]
    That is, $\mathcal{S}(w)$ consists of all $z \in W$ for which the $U$-orbit of $\mathfrak{n} \cap \mathfrak{n}^z$ for the adjoint action coincides with that of $\mathfrak{n} \cap \mathfrak{n}^w$.
\end{definition}

\begin{definition}
In \cite{BorBry}, it is explained that $\mathfrak{n}_w$ has a dense open subset consisting of elements $x$ such that $\overline{V_w}$, considered now as a variety over $\overline{\mathbb{F}}_q$ is an irreducible component of $\overline{\mathbb{O}_x \cap \mathfrak{\mathfrak{n}}}$, where $\mathbb{O}_x$ is the nilpotent orbit $G\cdot x$. Let $\mathfrak{n}_w^\circ$ be this set.
\end{definition}

\begin{definition}
    Suppose $u \in \mathfrak{n}_w^\circ$. Let $b(w) = \dim X_u$, and let $f^w$ be the number of irreducible components of $X_u$.
\end{definition}

The following result, expressing a relationship between Springer fibers for $u \in U_w$ and the geometry of the varieties $V_w = U \cdot \mathfrak{n}\cap \mathfrak{n}^w$, is explained in \cite[Appendix B]{BorBry}.
\begin{prop}\label{prop:geometric}
 For $u \in \mathfrak{n}_w^\circ$, the number of irreducible components of $X_u$ is equal to $|\mathcal{S}(w)|$. Further, for any $u \in \mathfrak{n}_w$,
    \begin{align*}
        \dim X_u \leq b(w).
    \end{align*} 
\end{prop}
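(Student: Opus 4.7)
My plan is to deduce both parts of the proposition from known results on orbital varieties and Springer fibers in the work of Steinberg, Spaltenstein, and Borho--Brylinski, whom the authors cite directly for this proposition. The overall strategy is to reduce the geometric question about $X_u$ to a question about orbital varieties and then interpret the answer via the combinatorics of Steinberg cells.

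For the count of irreducible components, the first key input is the classical bijection (due to Spaltenstein) between irreducible components of the Springer fiber $X_u$ and the irreducible components of $\mathbb{O}_u \cap \mathfrak{n}$, which are the orbital varieties of $\mathbb{O}_u$. The second input is the parametrization, going back to Steinberg and refined in Borho--Brylinski, of orbital varieties by right Steinberg cells: the map $z \mapsto \overline{V_z}$ from $W$ onto the set of orbital varieties has fibers given by the cells of Definition~\ref{def:steincell}. For $u \in \mathfrak{n}_w^\circ$, the defining property of $\mathfrak{n}_w^\circ$ ensures that $\overline{V_w}$ is one such orbital variety of $\mathbb{O}_u$, and the identity $|\mathrm{Irr}(X_u)| = |\mathcal{S}(w)|$ should then follow by matching elements of $\mathcal{S}(w)$ bijectively with distinct irreducible components of $X_u$, as carried out in Appendix~B of Borho--Brylinski.

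For the dimension bound, I would use Spaltenstein's formula $\dim X_u = \ell(w_0) - \tfrac{1}{2}\dim \mathbb{O}_u$, which depends on $u$ only through its nilpotent orbit. On $\mathfrak{n}_w$, the function $u \mapsto \dim \mathbb{O}_u$ varies semicontinuously and takes its extremal value on the dense open locus $\mathfrak{n}_w^\circ$, so $u \mapsto \dim X_u$ varies semicontinuously as well, with extremal value $b(w)$ on $\mathfrak{n}_w^\circ$. Combined with the definition $b(w) = \dim X_u$ on this generic locus, this yields the stated comparison between $\dim X_u$ and $b(w)$ for all $u \in \mathfrak{n}_w$.

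The step I expect to be the main obstacle is the precise identification in the first part of $\mathrm{Irr}(X_u)$ with $\mathcal{S}(w)$. A priori the orbital-variety approach only sees the set of those $z \in W$ whose $\overline{V_z}$ is an orbital variety of $\mathbb{O}_u$, which could in principle include Weyl-group elements outside $\mathcal{S}(w)$ that live in other Steinberg cells with the same generic Jordan type as $w$. In Type $A$ the symmetry between $P$- and $Q$-tableaux in Robinson--Schensted makes the identity $|\mathcal{S}(w)| = f^\lambda = |\mathrm{Irr}(X_u)|$ transparent, but in general type extracting this numerical equality requires the full combinatorial content of Borho--Brylinski's treatment of Steinberg cells and orbital varieties.
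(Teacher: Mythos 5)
The paper gives no independent argument for this proposition --- it is quoted from \cite[Appendix B]{BorBry} --- so your plan of reassembling it from Spaltenstein and Steinberg--Borho--Brylinski is in the same spirit; the trouble is that the key input you quote is not correct as stated outside type $A$, and that is exactly where the content of the first claim lies. Spaltenstein's theorem is not a bijection between $\mathrm{Irr}(X_u)$ and the orbital varieties of $\mathbb{O}_u$: passing through $G\times_B(\mathbb{O}_u\cap\mathfrak{n})$, what one obtains is a bijection between orbital varieties and the orbits of the component group $A(u)$ of the centralizer of $u$ acting on $\mathrm{Irr}(X_u)$, and these coincide with the components themselves only when $A(u)$ acts trivially (e.g.\ for $\mathrm{GL}_n$, where $A(u)$ is trivial). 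Likewise, under Steinberg's parametrization of $W$ by $A(u)$-orbits of pairs of components, the fibre of $z\mapsto \overline{V_z}$ over $\overline{V_w}$ --- which is $\mathcal{S}(w)$ --- is counted by orbits of the stabilizer in $A(u)$ of the component attached to $w$ acting on $\mathrm{Irr}(X_u)$, not by $\mathrm{Irr}(X_u)$ itself; already for the subregular orbit in type $C_2$, where $A(u)\cong\mathbb{Z}/2$ permutes the three components nontrivially, the two counts differ cell by cell. So the matching of elements of $\mathcal{S}(w)$ with components of $X_u$ is precisely the delicate point; your closing paragraph senses that this is the obstacle but misdiagnoses it (the issue is the $A(u)$-action, not contamination by elements of other cells with the same generic Jordan type) and then defers it wholesale back to Borho--Brylinski. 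As a proof, the first half therefore adds nothing beyond the citation the paper already makes, and the one substantive assertion it does add (the unqualified Spaltenstein bijection) fails in general type.

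The dimension half is in better shape: Spaltenstein's formula $\dim X_u=\ell(w_0)-\tfrac{1}{2}\dim\mathbb{O}_u$ together with lower semicontinuity of $u\mapsto\dim\mathbb{O}_u$ on the irreducible variety $\mathfrak{n}_w$ is the right mechanism, but you must commit to a direction. Since the orbit dimension is maximal on a dense open subset, the generic $u$ \emph{minimizes} $\dim X_u$, so what this argument proves is $\dim X_u\geq b(w)$ for all $u\in\mathfrak{n}_w$, with equality on $\mathfrak{n}_w^\circ$. That is the inequality actually needed downstream (it is what makes the non-generic terms negligible, compare the type-$A$ statement in the proof of Proposition \ref{prop:mainapprox} that $|X_a|$ has degree at least $b(\lambda)$), whereas the inequality in the direction printed in the proposition would already fail at $u=0\in\mathfrak{n}_w$ for $w\neq w_0$. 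Writing ``extremal value'' and ``the stated comparison'' leaves your argument compatible with either reading, and the two readings are not interchangeable; state and prove the bound $\dim X_u\geq b(w)$ explicitly.
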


\begin{example}
    In Type $A$, we note that $b(w) = b(\lambda)$. Further, our comments after Proposition \ref{prop:pointcount} guarantee that in this case, the irreducible components of $X_u$ for $u \in \mathfrak{n}_w^\circ$ is equal to $|\mathcal{S}(w)| = f^\lambda$ for $\lambda$ the shape of $P(\lambda)$
\end{example}

The following is then a direct consequence of Proposition \ref{prop:geometric}.
\begin{corollary}
    For any $w \in W$,
    \begin{align}
        \lim_{q \to \infty} \frac{1}{|U_w|}\sum_{u \in U_w} \frac{q^{b(w)}}{|X_u|} & = \frac{1}{f^{w}}
    \end{align}
\end{corollary}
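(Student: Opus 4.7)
The plan is to directly adapt the argument of Proposition \ref{prop:mainapprox}, replacing the explicit Green polynomial combinatorics with the geometric inputs of Proposition \ref{prop:geometric}. The sum over $U_w$ is equivalent to the sum over $\mathfrak{n}_w = U_w - \mathrm{id}$ since fixed-point sets are preserved by the translation.

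The first step is to split the average over $\mathfrak{n}_w$ into the generic part $\mathfrak{n}_w^\circ$ and its complement. Because $\mathfrak{n}_w^\circ$ is a non-empty Zariski-open subset of the affine space $\mathfrak{n}_w$ (of dimension $\ell(w_0) - \ell(w)$) defined over $\mathbb{F}_q$, its complement is a proper closed subvariety, giving
\[
\frac{|\mathfrak{n}_w \setminus \mathfrak{n}_w^\circ|}{|\mathfrak{n}_w|} = O(1/q).
\]

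For $u \in \mathfrak{n}_w^\circ$, Proposition \ref{prop:geometric} gives $\dim X_u = b(w)$ with $f^w$ irreducible components of top dimension. Using the existence of an affine paving of the Springer fiber (as recalled for $\mathrm{GL}_n$ in Section \ref{sec:wild} and available in a suitable generality from Spaltenstein's work), $|X_u| = f^w \cdot q^{b(w)} + O(q^{b(w)-1})$, uniformly in $u$. Hence $q^{b(w)}/|X_u| = 1/f^w + O(1/q)$, and the generic part contributes $1/f^w + O(1/q)$ to the average.

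The main subtlety is controlling the non-generic locus. For $u \in \mathfrak{n}_w \setminus \mathfrak{n}_w^\circ$, the nilpotent orbit $\mathbb{O}_u$ is strictly smaller than the generic orbit meeting $\mathfrak{n}_w$. By the Steinberg--Spaltenstein dimension formula $\dim X_u = \ell(w_0) - \dim \mathbb{O}_u / 2$, smaller orbits give strictly larger Springer fibers, so $\dim X_u > b(w)$ and hence $|X_u| \geq c \cdot q^{b(w)+1}$ for some constant $c > 0$. Thus $q^{b(w)}/|X_u| = O(1/q)$ on the non-generic locus. Combined with the $O(1/q)$ density of this locus inside $\mathfrak{n}_w$, the total non-generic contribution is $O(1/q^2)$. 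Adding the two pieces yields that the full average equals $1/f^w + O(1/q)$, which gives the limit.
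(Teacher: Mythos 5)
Your proof is correct and supplies the details the paper omits: the paper asserts this corollary is ``a direct consequence of Proposition~\ref{prop:geometric}'' without elaboration, and you fill this in by replicating the structure of the proof of Proposition~\ref{prop:mainapprox}, translating the Green-polynomial combinatorics into the geometric language of orbital varieties and the Steinberg--Spaltenstein dimension formula.

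One point worth highlighting. Your control of the non-generic locus uses that for $u \in \mathfrak{n}_w \setminus \mathfrak{n}_w^\circ$, the orbit $\mathbb{O}_u$ lies in the boundary of the generic orbit meeting $\overline{V_w}$, so $\dim\mathbb{O}_u$ is strictly smaller and hence $\dim X_u > b(w)$. This is the direction that the argument actually requires: it is what forces $q^{b(w)}/|X_u| = O(1/q)$ on the non-generic locus and keeps its contribution negligible. It is also consistent with the type~$A$ computation (the proof of Proposition~\ref{prop:mainapprox} states that non-generic elements give $|X_a|$ of degree at least $b(\lambda)$). Note, however, that Proposition~\ref{prop:geometric} as printed states $\dim X_u \le b(w)$ for all $u \in \mathfrak{n}_w$, which is the \emph{opposite} inequality; taken literally it would allow $q^{b(w)}/|X_u|$ to be unbounded on the non-generic locus. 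The companion Lemma~\ref{lem:blambdadominance} has the same reversed inequality. Your derivation via the dimension formula recovers the correct direction and is what makes the estimate work; you should treat those printed inequalities as typos.

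Two smaller caveats, both of which the paper itself glosses over and which do not invalidate your argument: (1) the existence of affine pavings of Springer fibers over $\mathbb{F}_q$ in arbitrary type is a nontrivial input you invoke without citation beyond the type~$A$ discussion; (2) for the leading coefficient of $|X_u(\mathbb{F}_q)|$ to equal the number $f^w$ of geometric irreducible components, one implicitly needs the Frobenius to act trivially (or at least with trace $f^w$) on the top cohomology; this is automatic in type~$A$ but deserves a remark in general type.
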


The very same proof we used to establish Theorem \ref{thm:limit} can then be used to show the following generalization.
\begin{theorem}\label{thm:gentypelimit}
    For any $w, z \in W$,
    \begin{align*}
        \lim_{q \to \infty} P_W(w, z) & = \begin{cases}
        \frac{1}{f^{\mathcal{S}(w)}} & z \in \mathcal{S}(w)\\
        0 & z \not\in \mathcal{S}(w)
    \end{cases}
    \end{align*}
    In fact,\[P_{W}(w, z) = \frac{1}{f^\lambda}\delta_{\mathcal{S}(w), \mathcal{S}(z)} + O(1/q).\]
\end{theorem}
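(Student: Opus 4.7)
The plan is to mirror verbatim the chain of estimates used to prove Theorem \ref{thm:limit}, substituting the general-type ingredients now available: the cardinality $|U_w| = q^{\ell(w_0) - \ell(w)}$ from (\ref{eqn:uwlater}), the geometric data in Proposition \ref{prop:geometric}, and the corollary immediately preceding the theorem. The starting point is the Burnside transition formula
\begin{align*}
P_W(w, z) = \frac{1}{|U_z||U_w|} \sum_{u \in U} \sum_{a \in U_w \cap u U_z u^{-1}} \frac{1}{|X_a|},
\end{align*}
and I would first translate the sum from $U_w$ to the Lie algebra $\mathfrak{n}_w$ via $u \mapsto u - \mathrm{id}$, which preserves fixed flags.

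Next I would establish the general-type analogue of Lemma \ref{lem:stabilizer}: applied to the adjoint action of $U$ on the Grassmannian of $(\ell(w_0) - \ell(z))$-dimensional subspaces of $\mathfrak{g}$, orbit-stabilizer gives that the stabilizer of $\mathfrak{n}_z$ has cardinality $|U||\mathfrak{n}_z|/|V_z|$. The proof of Lemma \ref{lem:stabilizer} is purely orbit-theoretic and transfers without modification. Using it to collapse the inner double sum, when $z \in \mathcal{S}(w)$ (that is, $V_z = V_w$) the computation reduces, up to lower-order corrections, to
\begin{align*}
P_W(w, z) \approx \frac{|U|}{|V_w| \cdot |\mathfrak{n}_w|} \sum_{a \in \mathfrak{n}_w} \frac{1}{|X_a|}.
\end{align*}
The prefactor $|U|/|V_w|$ has leading order $q^{b(w)}$ by the intrinsic form of Lemma \ref{lem:blambda}, namely the Steinberg--Spaltenstein identity $\dim V_w = \ell(w_0) - \dim X_u$ for $u$ generic in $\mathfrak{n}_w^\circ$. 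The corollary preceding the theorem then delivers the limit $1/f^w = 1/|\mathcal{S}(w)|$, which matches the asserted normalization.

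The main obstacle will be the case $z \notin \mathcal{S}(w)$, where I need to extract a genuine factor of $1/q$. Two routes are available, both of which appear in Type A. The first is the conservation-of-mass shortcut: since the preceding calculation already assigns total probability $|\mathcal{S}(w)| \cdot 1/f^w = 1$ to the elements of $\mathcal{S}(w)$ in the limit, every $z \notin \mathcal{S}(w)$ must satisfy $P_W(w, z) \to 0$. To upgrade this to an $O(1/q)$ bound, I would invoke the dimension inequality $\dim X_a \leq b(w)$ for every $a \in \mathfrak{n}_w$ supplied by the second half of Proposition \ref{prop:geometric}; this plays the role that the dominance-order estimate of Lemma \ref{lem:genericjordan} did in Type A, forcing contributions from $a \in \mathfrak{n}_w \setminus \mathfrak{n}_w^\circ$, and from $a$ whose orbit closure fails to meet $V_z$, to be subleading by a factor of $q$. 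This dimension-counting step is the only place where more than quoting prior ingredients is needed, and it is precisely where the type-independent geometric input of Proposition \ref{prop:geometric} does all the work previously carried out by the Greene--Gansner combinatorics.
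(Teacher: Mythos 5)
Your proposal matches the paper's proof, which consists precisely of the observation that the Type A argument for Theorem \ref{thm:limit} goes through verbatim once the orbit--stabilizer count of Lemma \ref{lem:stabilizer}, the dimension identity $b(w) = \ell(w_0) - \dim V_w$, and the corollary to Proposition \ref{prop:geometric} are substituted for their Type A counterparts, with the off-cell case handled exactly as you say (conservation of mass together with the bound $\dim X_a \leq b(w)$). The only cosmetic difference is your identification $u \mapsto u - \mathrm{id}$ of $U_w$ with $\mathfrak{n}_w$, which is Type-A-specific notation for an identification the paper also makes implicitly in general type (via root-subgroup coordinates).
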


\section{Consequences for mixing times}\label{sec:mixing}
We now explain that the leading-order behavior of the Burnside process described in Sections \ref{sec:typea} and \ref{sec:generaltype} gives a lower bound on the mixing time.

For a given Weyl group $W$, let $\mathcal{S}_1, \dots, \mathcal{S}_k$ be an enumeration of the right Steinberg cells. By the analysis done in Section \ref{sec:generaltype}, there exists some constant $C'$ not depending on $q$ such that for any $w, z$ with $\mathcal{S}(w) \neq \mathcal{S}(z)$,
\begin{align}
    P_W(w, z) & \leq \frac{C'}{q}.
\end{align}
In other words, there is a constant $C$ not depending on $q$ such that for any $w \in W$,
\begin{align}
    P_W(\mathcal{S}(w), W - \mathcal{S}(w)) & \leq \frac{C}{q}.
\end{align}
We will use this to prove the following lower bound on the mixing time of the Markov chain.
\begin{theorem}\label{thm:mixing}
    For any $\varepsilon > 0$, the $\varepsilon$-mixing time $\tau_{\mathrm{mix}}(\varepsilon)$ of the Burnside process on $W$ satisfies
    \begin{align*}
        \tau_{\mathrm{mix}}(\varepsilon)& \geq \frac{q}{2C}\log\left(\frac{1}{2\varepsilon}\right) = \Omega(q\log(1/\varepsilon)).
    \end{align*}
\end{theorem}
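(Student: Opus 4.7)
The plan is to translate the already-established per-state escape bound $P_W(\mathcal{S}(w), W \setminus \mathcal{S}(w)) \leq C/q$ into a small-conductance (bottleneck) bound, apply Cheeger's inequality to bound the spectral gap from above, and then invoke the standard spectral lower bound on the mixing time of a reversible ergodic Markov chain.

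The first step is to fix a right Steinberg cell $\mathcal{S} = \mathcal{S}_j$ with $\pi(\mathcal{S}) \leq 1/2$; since $\pi$ is uniform on $W$ and any nontrivial Weyl group has more than one right Steinberg cell, such a choice is possible. Using $\pi(w) = 1/|W|$ and the hypothesis, the bottleneck flow out of $\mathcal{S}$ satisfies
\[
Q(\mathcal{S}, \mathcal{S}^c) \;=\; \sum_{w \in \mathcal{S}} \pi(w)\, P_W(w, \mathcal{S}^c) \;\leq\; \pi(\mathcal{S}) \cdot \tfrac{C}{q},
\]
so the bottleneck ratio is $\Phi(\mathcal{S}) = Q(\mathcal{S}, \mathcal{S}^c)/\pi(\mathcal{S}) \leq C/q$, giving the global bottleneck bound $\Phi_* \leq C/q$.

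The second step is to apply Cheeger's inequality for reversible chains (the Burnside process is reversible, as recalled in Section \ref{sec:burnside}), yielding $\gamma \leq 2\Phi_* \leq 2C/q$, so the relaxation time satisfies $t_{\mathrm{rel}} = 1/\gamma \geq q/(2C)$. One then invokes the standard relaxation-time lower bound $\tau_{\mathrm{mix}}(\varepsilon) \geq (t_{\mathrm{rel}} - 1)\log(1/(2\varepsilon))$ (e.g.\ Theorem 12.4 of Levin--Peres--Wilmer) to obtain the claimed inequality, with the additive $-1$ absorbed into the $\Omega$-notation.

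The main obstacle I anticipate is not the Cheeger step itself, which is mechanical once the escape bound is in hand, but rather a technical subtlety: the standard spectral lower bound on mixing requires smallness of the \emph{absolute} spectral gap $\gamma_* = 1 - \max_{i \neq 1}|\lambda_i|$, not merely $\gamma = 1 - \lambda_2$. To deal with this, I would verify that the Burnside kernel factors as $P = K^* K$ in $\ell^2(\pi)$ through its two-step construction (sample $g \in \mathrm{stab}_U(x)$ uniformly, then $y \in \mathrm{Fix}_X(g)$ uniformly), which forces $P$ to be positive semi-definite and hence $\gamma_* = \gamma$. Once this is in place, everything else is a direct application of standard Markov chain theory.
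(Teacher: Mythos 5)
Your proposal is correct, and its core is exactly the paper's argument: fix a Steinberg cell $\mathcal{S}$ with $\pi(\mathcal{S})\le 1/2$, use the uniform stationary distribution and the escape bound $P_W(w,\mathcal{S}^c)\le C/q$ to get $Q(\mathcal{S},\mathcal{S}^c)\le (C/q)\pi(\mathcal{S})$, hence bottleneck ratio at most $C/q$. The only divergence is the last step: the paper passes from the conductance bound directly to the mixing-time lower bound (citing the bottleneck-ratio theorem in Levin--Peres--Wilmer), whereas you route through Cheeger's inequality $\gamma\le 2\Phi_*$, the relaxation time $t_{\mathrm{rel}}\ge q/(2C)$, and the bound $\tau_{\mathrm{mix}}(\varepsilon)\ge (t_{\mathrm{rel}}-1)\log(1/(2\varepsilon))$. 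That detour is valid, and your handling of the usual pitfall is sound: the relaxation-time bound needs the absolute spectral gap, and the Burnside kernel is indeed positive semi-definite because it is a two-component (auxiliary-variable) chain of the form $KK^*$ on $\ell^2(\pi)$ --- a fact the paper itself uses implicitly when it remarks that ``all eigenvalues are non-negative because of the structure of these two step chains.'' Your route costs a harmless additive $-1$ (so you literally prove $(q/(2C)-1)\log(1/(2\varepsilon))$ rather than $\tfrac{q}{2C}\log(1/(2\varepsilon))$, which is fine for the $\Omega(q\log(1/\varepsilon))$ claim and for the spirit of the theorem), but it buys a cleaner justification of the $\varepsilon$-dependence and makes explicit the spectral fact that the paper leaves implicit; if anything, your justification that a cell with $\pi(\mathcal{S})\le 1/2$ exists is more careful than the paper's.
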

\begin{proof}
    Pick any Steinberg cell $S$ and note that $\pi(S)\le 1/2$; since the stationary distribution of the chain is uniform, we have $\pi(x)=1/|W|$ for all $w\in W$ and hence $\pi(S)=|S|/|W|$. For any $w\in S$, we know the probability $P(w, S^c)$ of leaving $S$ in one step is bounded by $C/q$. Now consider the stationary flow out of $S$:
\[
Q(S,S^c)=\sum_{w\in S}\pi(w)P(w,S^c)
\leq \frac{C}{q}\pi(S).
\]
Therefore the conductance of $S$ satisfies
\[
\Phi(S)= \frac{Q(S,S^c)}{\pi(S)} \leq \frac{C}{q}.
\]
Since $\Phi = \min_{T:0<\pi(T)\le 1/2}\Phi(T)$, we conclude $\Phi \le C/q$.

By the standard conductance lower bound (see, e.g., \cite[Theorem~7.4]{MarkovMixing}),
\[
\tau_{\mathrm{mix}}(\varepsilon)\;\ge\;\frac{1}{2\Phi}\ln\frac{1}{2\varepsilon}.
\]
Plugging in $\Phi \le C/q$ yields
\[
\tau_{\mathrm{mix}}(\varepsilon)\geq \frac{q}{2C}\ln\frac{1}{2\varepsilon}
=\frac{q}{2C(|W|-1)}\ln\frac{1}{2\varepsilon}.
\]

In particular, $\tau_{\mathrm{mix}}(\varepsilon)=\Omega(q\log(1/\varepsilon))$ as claimed.
\end{proof}
Theorem \ref{thm:mixing} guarantees that, by choosing $q$ large, the Burnside process on $W$ defined in the present paper provides a natural family of Markov chains on the Weyl group which can be made to mix ``arbitrarily slowly."

\section{Examples and simulations}\label{sec:sim}

\subsection{Explicit transition probabilities for \texorpdfstring{$\mathrm{GL}_3(\mathbb{F}_q)$}{GL3(Fq)}}
    Using a computer, we produced the transition matrix for $G = \mathrm{GL}_3(\mathbb{F}_q)$ for arbitrary $q$, expressing each transition probability as a rational function in $q$. In this case, $W \cong S_3$, and we again label its elements in the order
    \[\{e, s_1, s_2, s_2s_1, s_1s_2, w_0\}.\]
    The transition matrix is as follows:
    \[\renewcommand{\arraystretch}{2}
\scalebox{0.70}{%
$\displaystyle
\begin{pmatrix}
\frac{q^3}{q^3 + 2q^2 + 2q + 1} & \frac{q^3 + 2q^2 + q - 1}{2q^4 + 5q^3 + 6q^2 + 4q + 1} & \frac{q^3 + 2q^2 + q - 1}{2q^4 + 5q^3 + 6q^2 + 4q + 1} & \frac{q^3 + q^2 + 1}{2q^4 + 5q^3 + 6q^2 + 4q + 1} & \frac{q^3 + q^2 + 1}{2q^4 + 5q^3 + 6q^2 + 4q + 1} & \frac{1}{q^3 + 2q^2 + 2q + 1}\\
\frac{q^3 + 2q^2 + q - 1}{2q^4 + 5q^3 + 6q^2 + 4q + 1} & \frac{q^4 + 2q^3 + q^2 - q}{2q^4 + 5q^3 + 6q^2 + 4q + 1} & \frac{q^3 + q^2 + 1}{2q^4 + 5q^3 + 6q^2 + 4q + 1} & \frac{q^4 + q^3 + q}{2q^4 + 5q^3 + 6q^2 + 4q + 1} & \frac{1}{q^3 + 2q^2 + 2q + 1} & \frac{q}{q^3 + 2q^2 + 2q + 1}\\
\frac{q^3 + 2q^2 + q - 1}{2q^4 + 5q^3 + 6q^2 + 4q + 1} & \frac{q^3 + q^2 + 1}{2q^4 + 5q^3 + 6q^2 + 4q + 1} & \frac{q^4 + 2q^3 + q^2 - q}{2q^4 + 5q^3 + 6q^2 + 4q + 1} & \frac{1}{q^3 + 2q^2 + 2q + 1} & \frac{q^4 + q^3 + q}{2q^4 + 5q^3 + 6q^2 + 4q + 1} & \frac{q}{q^3 + 2q^2 + 2q + 1}\\
 \frac{q^3 + q^2 + 1}{2q^4 + 5q^3 + 6q^2 + 4q + 1} & \frac{q^4 + q^3 + q}{2q^4 + 5q^3 + 6q^2 + 4q + 1} & \frac{1}{q^3 + 2q^2 + 2q + 1} & \frac{q^4 + q^3 + 2q^2 - 1}{2q^4 + 5q^3 + 6q^2 + 4q + 1} & \frac{q}{q^3 + 2q^2 + 2q + 1} & \frac{q^2}{q^3 + 2q^2 + 2q + 1}\\
\frac{q^3 + q^2 + 1}{2q^4 + 5q^3 + 6q^2 + 4q + 1} & \frac{1}{q^3 + 2q^2 + 2q + 1} & \frac{q^4 + q^3 + q}{2q^4 + 5q^3 + 6q^2 + 4q + 1} & \frac{q}{q^3 + 2q^2 + 2q + 1} & \frac{q^4 + q^3 + 2q^2 - 1}{2q^4 + 5q^3 + 6q^2 + 4q + 1} & \frac{q^2}{q^3 + 2q^2 + 2q + 1}\\
\frac{1}{q^3 + 2q^2 + 2q + 1} & \frac{q}{q^3 + 2q^2 + 2q + 1} & \frac{q}{q^3 + 2q^2 + 2q + 1} & \frac{q^2}{q^3 + 2q^2 + 2q + 1} & \frac{q^2}{q^3 + 2q^2 + 2q + 1} & \frac{q^3}{q^3 + 2q^2 + 2q + 1}
\end{pmatrix}
$}.
\]

Even in this small example, we see that the transition probabilities are difficult to understand algebraically, and exhibit some of the ``wild" behavior described in Section \ref{sec:wild}. However, we note that looking at their leading terms reveals the combinatorial patterns described in Section \ref{sec:typea}. Indeed, the $q \to \infty$ limit of this matrix is the matrix
\[
\begin{blockarray}{ccccccc}
      & e & s_1 & s_2 & s_2s_1 & s_1s_2 & w_0 \\[0.4em]
\begin{block}{c(cccccc)}
  e     & 1 & 0 & 0 & 0 & 0 & 0 \\
  s_1   & 0 & \tfrac{1}{2} & 0 & \tfrac{1}{2} & 0 & 0 \\
  s_2   & 0 & 0 & \tfrac{1}{2} & 0 & \tfrac{1}{2} & 0 \\
  s_2s_1& 0 & \tfrac{1}{2} & 0 & \tfrac{1}{2} & 0 & 0 \\
  s_1s_2& 0 & 0 & \tfrac{1}{2} & 0 & \tfrac{1}{2} & 0 \\
  w_0   & 0 & 0 & 0 & 0 & 0 & 1\\
\end{block}
\end{blockarray}
\]

Unlike the wild entries in the first matrix, this limiting matrix can be explained by Theorem \ref{thm:limit} and the fact that under the Robinson--Schensted correspondence,
\begin{align}
    P(e) &= \vcenter{\hbox{\begin{ytableau}
1 & 2 & 3
\end{ytableau}}}\\
P(s_1), P(s_2s_1) &= \vcenter{\hbox{\begin{ytableau}
1 & 3\\
2 
\end{ytableau}}}\\
P(s_2), P(s_1s_2) &= \vcenter{\hbox{\begin{ytableau}
1 & 2\\
3
\end{ytableau}}}\\
P(s_1s_2s_1) &= \vcenter{\hbox{\begin{ytableau}
1\\
2\\
3
\end{ytableau}}}.
\end{align}

The beautiful mathematical structure underlying the transition
matrix above suggests that representation theory might offer a way
diagonalizing these Markov chains. Indeed, after a lot of massaging,
this happened for the simplest case of the Burnside process \cite{DLR}. One hallmark of a ``nice diagonalization" is ``nice eigenvalues" (something like rational numbers). Alas, for the example above, when $q=2$, the eigenvalues are $0, 1, \frac{2}{5}$, and the three irrational roots of the cubic polynomial
\[525 x^3 - 315x^2 + 50x - 2.\]
These don't inspire hope.

That being said, inspection of the matrix for general $q$ when $n=3$ reveals that the second-largest eigenvalue is $\frac{2q-2}{2q + 1}$ with (right) eigenvector $(0,1,-1,1,-1,0)$. Indeed, once observed, it is easy to check with a computer that this is an eigenvalue-eigenvector pair. There
is an eigenvalue 0 with eigenvector $(0, 1, -1, -1, 1, 0)$, and of course $1$ is an eigenvalue with eigenvector $(1,1,1,1,1,1)$. This gives three roots of the characteristic polynomial. The remaining eigenvalues are roots of a cubic and standard analysis shows that for any $q$, each such root is smaller than $\frac{2q-2}{2q+1}$. All eigenvalues
are non-negative because of the structure of these two step chains. Using these observations, we can give sharp bounds on the rate of
convergence of the chain to stationarity on $S_3$ for general $q$. The argument shows that order $2q+1$ steps are necessary and sufficient for
convergence, from any start.

\begin{prop}\label{prop:gl3precise}
For the Burnside process on flags in $\mathrm{GL}_3(\mathbb{F}_q)$, the Markov chain on $S_3$ after $l$ steps, satisfies
\begin{align*}
    \frac{1}{2}\left(1 - \frac{3}{2q + 1}\right)^l \leq ||K^l(x, \cdot) - \pi(\cdot)|| \leq \sqrt{\frac{3}{2}}\left(1 - \frac{3}{2q+1}\right)^l
\end{align*}
Above, the total variation distance is
\begin{align*}
    \sum_{w \in S_3} \left|K^l(x, w) - \frac{1}{6}\right|.
\end{align*}
The upper  bound holds for all $x \in S_3$ and all $l \geq 1$. The lower bound holds for all $x$ except $\mathrm{id}$ and $w_0$ and all $l\geq 1$.
\end{prop}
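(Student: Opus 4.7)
The plan is a direct spectral analysis of the explicit $6 \times 6$ transition matrix $K$ displayed just above. Two structural facts make this tractable: the chain is reversible with uniform stationary distribution $\pi \equiv 1/6$, and all eigenvalues of $K$ are nonnegative (as observed in the paragraph preceding the proposition; this is a consequence of the two-step factorization of the Burnside kernel, which expresses $\pi(x)K(x,y)$ as a Gram matrix). Thus $K$ is self-adjoint and positive semidefinite on $L^2(\pi)$, with real eigenvalues $1 = \beta_1 \ge \beta_2 \ge \dots \ge \beta_6 \ge 0$ and an orthonormal eigenbasis $f_1 \equiv 1, f_2, \dots, f_6$.

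The first step is to pin down the spectrum. A direct multiplication against the displayed matrix confirms three eigenpairs: the trivial pair $(1, \mathbf{1})$; the pair $(0,\, (0,1,-1,-1,1,0)^\top)$; and the pair $(\beta_*,\, (0,1,-1,1,-1,0)^\top)$, where $\beta_* := (2q-2)/(2q+1) = 1 - 3/(2q+1)$. Dividing the characteristic polynomial of $K$ by $\beta(\beta-1)(\beta-\beta_*)$ leaves a cubic $p(\beta) \in \mathbb{Q}(q)[\beta]$ whose three real roots account for the remaining eigenvalues. The main technical step is to show that all roots of $p$ lie in $[0, \beta_*]$, uniformly in $q$; this is a polynomial inequality in $q$ which I would attack by direct sign evaluation of $p$ at $0$, $\beta_*$, and $1$ together with Descartes' rule of signs applied to $p(\beta_* + t)$, or equivalently by examining a Sturm sequence. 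This uniform root comparison is the main obstacle in the proof.

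Once the spectrum is identified, the upper bound is the standard $L^2$ estimate. Expanding $K^l(x,y)/\pi(y) - 1 = \sum_{i \ge 2}\beta_i^l f_i(x)f_i(y)$ in the eigenbasis and applying Cauchy--Schwarz against $\pi$ yields
\begin{align*}
    4\|K^l(x,\cdot) - \pi\|_{TV}^2 \;\le\; \sum_{i \ge 2} \beta_i^{2l} f_i(x)^2 \;\le\; \beta_*^{2l}\sum_{i=1}^{6} f_i(x)^2 \;=\; \frac{\beta_*^{2l}}{\pi(x)} \;=\; 6\beta_*^{2l},
\end{align*}
so $\|K^l(x,\cdot) - \pi\|_{TV} \le \sqrt{3/2}\,\beta_*^l$ uniformly in $x$.

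For the lower bound I would apply the test function $v = (0,1,-1,1,-1,0)$. Since $\pi(v) = 0$ and $K^l v = \beta_*^l v$, the duality estimate $|\mu(f) - \nu(f)| \le 2\|f\|_\infty\,\|\mu-\nu\|_{TV}$ (applied to $\mu = K^l(x,\cdot)$ and $\nu = \pi$) gives $\|K^l(x,\cdot) - \pi\|_{TV} \ge \tfrac{1}{2}\beta_*^l |v(x)|$. Since $|v(x)| = 1$ precisely when $x \in S_3 \setminus \{e, w_0\}$, this yields the claimed lower bound, and the exclusion of $e$ and $w_0$ is accounted for by the vanishing of $v$ at those two elements.
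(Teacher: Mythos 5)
Your approach matches the paper's: both identify the eigenpair $\bigl(\beta_*,\,(0,1,-1,1,-1,0)\bigr)$ with $\beta_* = 1 - 3/(2q+1)$, argue that the three remaining eigenvalues (roots of a cubic) lie in $[0,\beta_*]$, and then obtain the upper bound from the standard $L^2(\pi)$ spectral estimate and the lower bound by testing against the $\beta_*$-eigenfunction (which vanishes exactly at $e$ and $w_0$). You derive the two inequalities from scratch where the paper cites them (the general $4\|K^l(x,\cdot)-\pi\|^2 \le \beta_*^{2l}/\pi_*$ bound and \cite[Lemma~2.1]{DKS}), and like the paper you flag but do not fully carry out the cubic-root comparison, so the proposal is correct and essentially the same proof.
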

\begin{proof}
The upper bound follows from the general upper bound (for any reversible chain on a finite state space with stationary distribution $\pi(y)$)
\begin{align*}
    4||K^l(x,\cdot) - \pi(\cdot)||^2 \leq \frac{1}{\pi_*}\beta_*^{2l}
\end{align*}
Here, $\pi_*= \min_{y \in W} \pi(y)$ and $\beta_*$ is the absolute value of the second eigenvalue. In the present case, $\pi_* = \frac{1}{6}$ and $\beta_* = \frac{2q-2}{2q+1}$. The claimed upper bound follows by elementary manipulation.

For the lower bound, use \cite[Lemma 2.1]{DKS} which states if $f(x)$ is an eigenvector with eigenvalue beta of a Markov chain with $\sup |f(x)|
= f_*$, then, for any $x$ and $l$,
\begin{align*}
    ||K^l(x,\cdot) - \pi(\cdot)|| \geq \frac{f(x)}{f_*} |\beta|^l.
\end{align*}
In the present case, the eigenfunction for $\frac{2q-2}{2q+1}$ takes values in $\{0, 1, -1\}$ with mean zero. 
\end{proof}
We remain curious about whether clean formulas for the second eigenvalue and its corresponding eigenvector may exist in other types.

\subsection{Simulations in larger types}

Using the algorithm described in Section \ref{sec:sampling}, we implemented the Burnside process in Type $A$ and ran some simulations for $\mathrm{GL}_4(\mathbb{F}_q)$ and $\mathrm{GL}_5(\mathbb{F}_q)$, some of which we include here to better illustrate the behavior we describe in Section \ref{sec:typea}. All of the figures in this sections are histograms depicting the number of times the Markov chain visited a given element in $S_n$. In these figures, we let $t$ be the number of steps for which the simulation was run. 

First, we note that when $q$ is small, we expect sampling from the Burnside process to be very close to sampling from the uniform distribution on $S_n$, as shown in Figures \ref{fig:first} and \ref{fig:second}. If $q$ is instead made to be large (compared to the number of steps), we then expect the uniform-like sampling property to disappear, instead being replaced by uniform sampling on the Steinberg cell, which in this case is the set of permutations with the same $P$-symbol as the initial element; this can be seen in Figures \ref{fig:third} and \ref{fig:fourth}.

Finally, Figure \ref{fig:fifth} shows an example where $q$ and $t$ have the same order of magnitude. Here we see that after remaining in a certain Steinberg cell for many steps, sometimes the chain exits and moves to a different Steinberg cell.

\begin{figure}[ht]
\caption{The histogram for $q = 3$, $n = 4$, $t = 1000$, with the chain starting at the permutation $3~ 2~1~ 4$.\label{fig:first}}
\begin{tikzpicture}
\begin{axis}[
    ymajorgrids,
    ymin=0, ymax=70,
    xmin=0.5, xmax=24.5,
    xtick=\empty,
    axis x line=bottom,
    axis y line=left,
    width=12cm,
    height=5cm,
    tick style={black},
    every axis plot/.append style={very thick}
]

\addplot+[ycomb, mark=*, mark options={fill=DarkPurple}, color=DarkPurple]
coordinates {
(1,36) (2,36) (3,47) (4,44) (5,37) (6,41) (7,47) (8,39)
(9,37) (10,41) (11,38) (12,43) (13,41) (14,41) (15,38) (16,36)
(17,47) (18,33) (19,56) (20,49) (21,46) (22,47) (23,45) (24,35)
};

\end{axis}
\end{tikzpicture}
\end{figure}
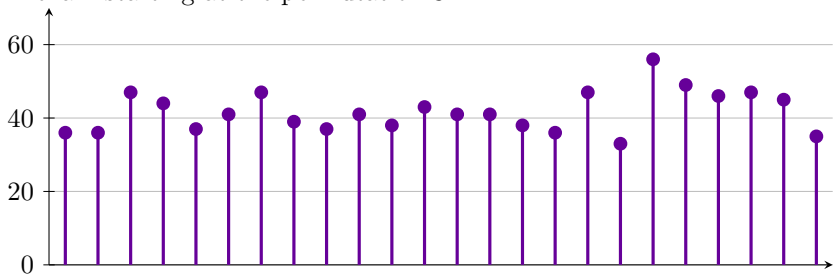

\begin{figure}[ht]
\caption{The histogram for $n = 5, q = 5, t = 1000$, with the chain starting at the permutation $3~ 2~1~ 4$.\label{fig:second}}
\begin{tikzpicture}
\begin{axis}[
    ymajorgrids,
    ymin=0, ymax=120,
    xmin=0.5, xmax=120.5,
    xtick=\empty,
    axis x line=bottom,
    axis y line=left,
    width=12cm,   % single-column friendly
    height=5cm,
    tick style={black},
    every axis plot/.append style={thin},  % thinner sticks
]

\addplot+[ycomb, mark=*, mark options={fill=DarkPurple, scale=0.6}, color=DarkPurple] % smaller markers
coordinates {
(1,94) (2,106) (3,99) (4,80) (5,94) (6,88) (7,87) (8,98)
(9,96) (10,93) (11,77) (12,70) (13,83) (14,88) (15,76) (16,87)
(17,93) (18,78) (19,114) (20,105) (21,91) (22,80) (23,102) (24,78)
(25,88) (26,92) (27,107) (28,93) (29,73) (30,101) (31,80) (32,76)
(33,75) (34,79) (35,81) (36,65) (37,92) (38,94) (39,78) (40,78)
(41,88) (42,76) (43,73) (44,90) (45,95) (46,67) (47,97) (48,84)
(49,81) (50,84) (51,69) (52,82) (53,94) (54,60) (55,70) (56,103)
(57,78) (58,75) (59,86) (60,80) (61,73) (62,73) (63,68) (64,61)
(65,67) (66,71) (67,95) (68,77) (69,81) (70,82) (71,67) (72,66)
(73,91) (74,90) (75,81) (76,87) (77,79) (78,73) (79,75) (80,70)
(81,80) (82,90) (83,106) (84,81) (85,76) (86,78) (87,86) (88,88)
(89,82) (90,73) (91,92) (92,86) (93,90) (94,85) (95,67) (96,82)
(97,92) (98,105) (99,89) (100,102) (101,94) (102,86) (103,79) (104,87)
(105,85) (106,66) (107,79) (108,85) (109,77) (110,78) (111,70) (112,61)
(113,76) (114,83) (115,113) (116,74) (117,76) (118,80) (119,75) (120,78)
};

\end{axis}
\end{tikzpicture}
\end{figure}

\begin{figure}[ht]
\caption{The histogram for $q = 1997, n = 4, t = 1000$, starting at $3~2~1~4$. We note that $3$ elements share the same $P$-symbol with $3~2~1~4$, so the behavior here is described by Theorem \ref{thm:limit}. \label{fig:third}}
\begin{tikzpicture}
\begin{axis}[
    ymajorgrids,
    ymin=0, ymax=375,
    xmin=0.5, xmax=24.5,
    xtick=\empty,
    axis x line=bottom,
    axis y line=left,
    width=12cm,
    height=5cm,
    tick style={black},
    every axis plot/.append style={very thick}
]

\addplot+[ycomb, mark=*, mark options={fill=black}, color=black]
coordinates {
(1,0) (2,0) (3,0) (4,0) (5,0) (6,0) (7,0) (8,0)
(9,0) (10,0) (11,0) (12,0) (13,0) (14,0) (15,324) (16,320)
(17,0) (18,356) (19,0) (20,0) (21,0) (22,0) (23,0) (24,0) (17,0) 
};

\addplot+[ycomb, mark=*, mark options={fill=DarkPurple}, color=DarkPurple]
coordinates {
 (15,324) (16,320)
(18,356) 
};

\end{axis}
\end{tikzpicture}
\end{figure}
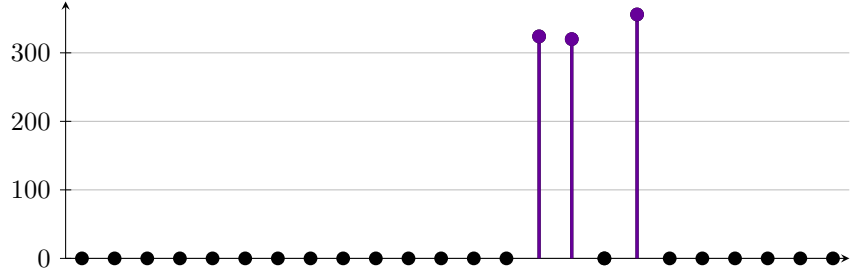

\begin{figure}[ht]
\caption{The histogram for $n = 5, q = 20011, t = 1000$, starting at $3~2~1~4~5$. We note that $4$ elements share the same $P$-symbol with $3~2~1~4~5$ in this case. \label{fig:fourth}}
\begin{tikzpicture}
\begin{axis}[
    ymajorgrids,
    ymin=0, ymax=280, % a bit above the largest spike
    xmin=0.5, xmax=120.5,
    xtick=\empty,
    axis x line=bottom,
    axis y line=left,
    width=12cm,
    height=5cm,
    tick style={black},
    every axis plot/.append style={thin},  % thinner sticks
]

\addplot+[ycomb, mark=*, mark options={fill=DarkPurple, scale=0.6}, color=DarkPurple]
coordinates {
 (62,246) (102,259) (103,266)
(105,229)
};

\addplot+[ycomb, mark=*, mark options={fill=black, scale=0.6}, color=black]
coordinates {
(1,0) (2,0) (3,0) (4,0) (5,0) (6,0) (7,0) (8,0)
(9,0) (10,0) (11,0) (12,0) (13,0) (14,0) (15,0) (16,0)
(17,0) (18,0) (19,0) (20,0) (21,0) (22,0) (23,0) (24,0)
(25,0) (26,0) (27,0) (28,0) (29,0) (30,0) (31,0) (32,0)
(33,0) (34,0) (35,0) (36,0) (37,0) (38,0) (39,0) (40,0)
(41,0) (42,0) (43,0) (44,0) (45,0) (46,0) (47,0) (48,0)
(49,0) (50,0) (51,0) (52,0) (53,0) (54,0) (55,0) (56,0)
(57,0) (58,0) (59,0) (60,0) (61,0) (63,0) (64,0)
(65,0) (66,0) (67,0) (68,0) (69,0) (70,0) (71,0) (72,0)
(73,0) (74,0) (75,0) (76,0) (77,0) (78,0) (79,0) (80,0)
(81,0) (82,0) (83,0) (84,0) (85,0) (86,0) (87,0) (88,0)
(89,0) (90,0) (91,0) (92,0) (93,0) (94,0) (95,0) (96,0)
(97,0) (98,0) (99,0) (100,0) (101,0) (104,0)
(106,0) (107,0) (108,0) (109,0) (110,0) (111,0) (112,0)
(113,0) (114,0) (115,0) (116,0) (117,0) (118,0) (119,0) (120,0)
};

\end{axis}
\end{tikzpicture}
\end{figure}

\begin{figure}[ht]
\caption{The histogram for $n = 5, t = 10000, q = 20011$, starting at $3~2~1~4~5$. In this example, the chain happened to visit four different Steinberg cells, staying in each one for a certain number of steps as pictured. The four ``buckets" into which these data cluster are labelled by the pictured Young tableaux according to their color.\label{fig:fifth}}

\begin{tikzpicture}
\begin{axis}[
    ymajorgrids,
    ymin=0, ymax=1300,
    xmin=0.5, xmax=120.5,
    xtick=\empty,
    axis x line=bottom,
    axis y line=left,
    width=12cm,
    height=7cm,
    tick style={black},
    every axis plot/.append style={thin},  % thinner sticks
]

\addplot+[ycomb, mark=*, mark options={fill=black, scale=0.6}, color=black]
coordinates {
(1,0) (2,0) (3,0) (4,0) (5,0) (6,0) (7,0) (8,0)
(9,0) (10,0) (11,0) (12,0) (13,0) (14,0) (15,0) (16,0)
(17,0) (18,0) (19,0) (20,0) (21,0) (22,0) (23,0) (24,0) (26,0) (27,0) (28,0) (29,0) (30,0) (32,0)(35,0) (36,0) (37,0) (38,0)(41,0)(43,0) (44,0) (45,0) (46,0) (47,0) (48,0) (49,0) (50,0) (51,0) (52,0) (57,0) (58,0) (59,0) (60,0) (65,0) (66,0) (67,0) (73,0) (74,0) (75,0) (76,0) (68,0) (81,0) (82,0) (83,0) (84,0) (85,0) (86,0) (87,0) (88,0)
(89,0) (90,0) (91,0) (92,0) (93,0) (94,0) (95,0) (96,0)
(97,0) (98,0) (99,0) (100,0) (101,0) (102,0) (103,0) (104,0)(109,0)(112,0)(111,0) (117,0) (118,0) (119,0) (120,0)
(105,0) (106,0) (113,0) (114,0) (115,0) (116,0)  (54,0)  (63,0) (71,0) (72,0) (69,0) (78,0) 
};

\addplot+[ycomb, mark=*, mark options={fill=blue, scale=0.6}, color=blue]
coordinates {
(53,1252)(55,1240) (56,1161)
 (61,1238) (62,1270) (64,1215)

};

\addplot+[ycomb, mark=*, mark options={fill=purple, scale=0.6}, color=purple]
coordinates {
 (70,204) 
 (107,186) (108,190)  (110,190) 

};

\addplot+[ycomb, mark=*, mark options={fill=orange, scale=0.6}, color=orange]
coordinates {
(39,82) (40,78)
 (42,84)
 
(77,91) (79,97) (80,87)

};

\addplot+[ycomb, mark=*, mark options={fill=OliveGreen, scale=0.6}, color=OliveGreen]
coordinates {
(25,327) (31,328) 
(33,345) (34,335)

};

\end{axis}
\end{tikzpicture}

\bigskip

{\color{OliveGreen}\begin{ytableau}
    *(OliveGreen!5)1 & *(OliveGreen!5)3 & *(OliveGreen!5)4 & *(OliveGreen!5)5\\
    *(OliveGreen!5)2
\end{ytableau}} \qquad {\color{orange}\begin{ytableau}
    *(orange!5)1 & *(orange!5)3 & *(orange!5)5\\
    *(orange!5)2\\
    *(orange!5)4 \end{ytableau}}  \qquad  {\color{blue}\begin{ytableau}
    *(blue!5) 1 & *(blue!5)4 & *(blue!5)5\\
    *(blue!5)2 & *(blue!5)3
\end{ytableau}}\qquad   {\color{purple}\begin{ytableau}
    *(purple!5)1 & *(purple!5)4\\
    *(purple!5)2\\
    *(purple!5)3\\
    *(purple!5)5
\end{ytableau}}
\end{figure}
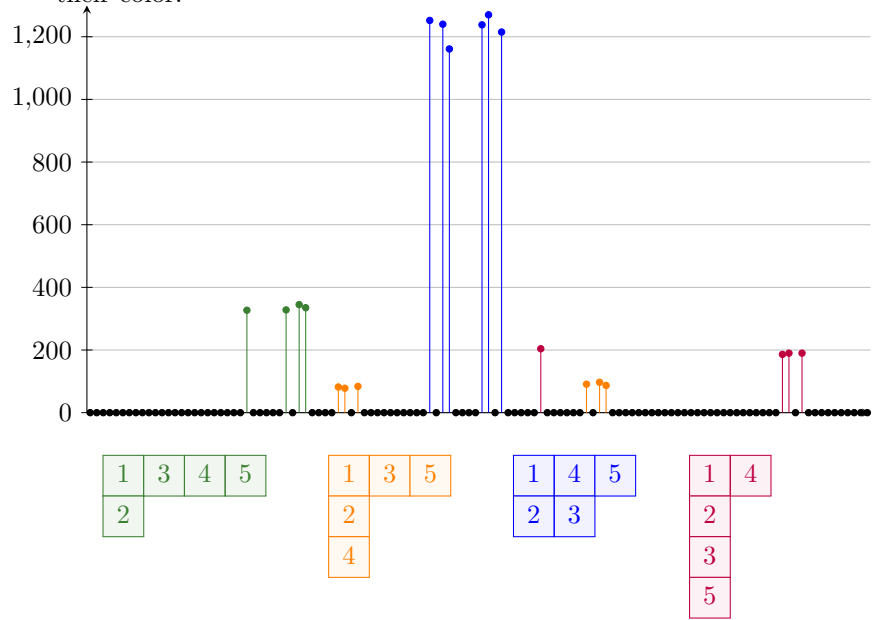

%[0, 0, 0, 0, 0, 0, 0, 0, 0, 0, 0, 0, 0, 0, 0, 0, 0, 0, 0, 0, 0, 0, 0, 0, 327, 0, 0, 0, 0, 0, 328, 0, 345, 335, 0, 0, 0, 0, 122, 118, 0, 124, 0, 0, 0, 0, 0, 0, 0, 0, 0, 0, 0, 0, 1252, 0, 1240, 1161, 0, 0, 0, 0, 1238, 1270, 0, 1215, 0, 0, 0, 0, 0, 144, 0, 0, 0, 0, 0, 0, 131, 0, 137, 127, 0, 0, 0, 0, 0, 0, 0, 0, 0, 0, 0, 0, 0, 0, 0, 0, 0, 0, 0, 0, 0, 0, 0, 0, 0, 0, 0, 0, 126, 130, 0, 130, 0, 0, 0, 0, 0, 0]

\section{Final comments}
There is much more to say about the Burnside process on $G/B$. First of all, we are interested in explicit algorithms as in Section \ref{sec:sampling} for finite Chevalley groups outside of Type $A$. We are hopeful that a similar approach as in the present paper could accomplish this for classical types, and curious as to whether such an explicit algorithm is possible to describe in any exceptional types (even, for example, for $G_2$). We note that an algorithm implementing the first step of the Burnside process is easy in any type, while the real work is in uniform sampling from Springer fibers.

In any type where such an algorithm can be described, one can then apply Theorem \ref{thm:gentypelimit} as follows. This theorem says that by setting $q$ to be very large, such an algorithm would effectively allow for near-uniform sampling from any Steinberg cell. In \cite{CDH}, it was shown that access to a uniform sampling gives allows for many ways to obtain good estimates for the size of a set. Thus, such an algorithm would provide a direct computational way to answer combinatorial questions about Steinberg cells, e.g.\ how large is a given cell, and how many total cells are there? For example, this idea applied to our algorithm in Type $A$ provides a (redundant) way to estimate the quantities $f^\lambda$, which of course already have precise formulas; this may be less redundant in other types where the combinatorics of Steinberg cells is much more complicated.

In other examples of the Burnside process, there have sometimes been ``interpretable" descriptions for the Markov chain, see e.g.\ the examples in \cite{DH}. We are curious as to whether, even in Type $A$, such an interpretable description could be provided for the Burnside process on $G/B$ studied in the present paper. Further, there is surely more to say about the mixing times for these chains than the lower bound we give in Section \ref{sec:mixing}, and a more refined estimate would be interesting.

\clearpage
\bibliographystyle{plainurl}
\bibliography{bibl}

\end{document}